\newcommand{\leqnos}{\tagsleft@true\let\veqno\@@leqno}
\newcommand{\reqnos}{\tagsleft@false\let\veqno\@@eqno}
\numberwithin{equation}{section}
\newcommand{\ind}{{\sf 1}}
\newcommand{\bP}{\mathbf{P}}
\newcommand{\bbP}{\mathbb{P}}
\newcommand{\bbE}{\mathbb{E}}
\newcommand{\bbR}{\mathbb{R}}
\newcommand{\bbN}{\mathbb{N}}
\newcommand{\bbZ}{\mathbb{Z}}
\newcommand{\ent}{\mathrm{Ent}}
\newcommand{\sD}{\mathscr{D}}
\newcommand{\sM}{\mathscr{M}}
\newcommand{\cP}{{\ensuremath{\mathcal P}} }
\newcommand{\cE}{{\ensuremath{\mathcal E}} }
\newcommand{\cL}{{\ensuremath{\mathcal L}} }
\newcommand{\cT}{{\ensuremath{\mathcal T}} }
\newcommand{\cG}{{\ensuremath{\mathcal G}} }
\newcommand{\cM}{{\ensuremath{\mathcal M}} }
\definecolor{nicolor}{RGB}{0,102,0}
\newcommand{\dd}{\textrm{d}}
\renewcommand{\epsilon}{\varepsilon}
\renewcommand{\phi}{\varphi}
\renewcommand{\tilde}{\widetilde}
\DeclareMathOperator*{\argmax}{arg\,max}
\newtheorem{theorem}{Theorem}[section]
\newtheorem{proposition}[theorem]{Proposition}
\newtheorem{corollary}[theorem]{Corollary}
\newtheorem{lemma}[theorem]{Lemma}
\theoremstyle{definition}
\newtheorem{remark}[theorem]{Remark}
\newcommand{\ga}{\alpha}
\newcommand{\gb}{\beta}
\newcommand{\gd}{\delta}
\newcommand{\gep}{\varepsilon}       
\newcommand{\go}{\omega}
\newcommand{\gU}{\Upsilon}
\renewcommand{\tilde}{\widetilde}
\renewcommand{\hat}{\widehat}
\newcommand{\maxtwo}[2]{\max_{\substack{#1 \\ #2}}} 
\newcommand{\suptwo}[2]{\sup_{\substack{#1 \\ #2}}}
\newcommand{\inftwo}[2]{\inf_{\substack{#1 \\ #2}}}
\title{Entropy-controlled Last-Passage Percolation}
\author[Q. Berger]{Quentin Berger}
\address{Sorbonne Universit\'e, LPSM,
Campus Pierre et Marie Curie, case 188,
4 place Jussieu, 75252 Paris Cedex 5, France}
\email{quentin.berger@sorbonne-universite.fr}
\author[N. Torri]{Niccol\`o Torri}
\address{Sorbonne Universit\'e, LPSM,
Campus Pierre et Marie Curie, case 188,
4 place Jussieu, 75252 Paris Cedex 5, France}
\email{niccolo.torri@sorbonne-universite.fr}
\date{}
\thanks{The authors acknowledge the support of PEPS grant from CNRS.
N. Torri was supported by a public grant overseen by the French National Research Agency (ANR) as part
of the ``Investissements d'Avenir'' program (ANR-11-LABX-0020-01 and ANR-10-LABX-0098).}
\begin{document}

\maketitle

\begin{abstract}
\ In the present article we consider a natural generalization of Hammersley's Last Passage Percolation (LPP) called \emph{Entropy-controlled Last Passage Percolation} (E-LPP), where points can be collected by paths with a global (entropy) constraint which takes in account the whole structure of the path, instead of a local ($1$-Lipschitz) constraint as in Hammersley's LPP.
The E-LPP turns out to be a key ingredient in the context of the directed polymer model when the environment is heavy-tailed, 
which we consider in \cite{cf:BT_HT}. We prove several estimates on the E-LPP in continuous and in discrete settings, which are of interest on their own. We give applications in the context of polymers in heavy-tail environment which are essentials tools in \cite{cf:BT_HT}: we show that the limiting variational problem conjectured in \cite[Conjecture 1.7]{cf:DZ} is finite, and we prove that the discrete variational problem converges to the continuous one, generalizing techniques used in~\cite{AL11,HM07}.
\\[0.1cm]
\textit{Keywords:} {Directed polymer}, {Heavy-tail distributions}, {Weak-coupling limit}, {Last Passage Percolation}, {Super-diffusivity.} \\[0.1cm]
\textit{2010 Mathematics Subject Classification:} Primary {60F05}, {82D60}; Secondary {60K37}, {60G70.}
\end{abstract}

\section{Introduction: Hammersley' LPP and beyond}

\label{sec:ELPP}

%

\label{sec:HammLPP}

Let us recall the original Hammersley's Last Passage Percolation (LPP) problem of the maximal number of points that can be collected by up/right paths, also known as Ulam's problem \cite{Ul61} of the maximal increasing sequence. 

Let $m\in \mathbb N$, and $(Z_i)_{1\le i\le m}$ be $m$ points independently drawn uniformly on the square $[0,1]^2$. We denote the coordinates of these points $Z_i:=(x_i,y_i)$ for $1\le i\le m$.
A sequence $(z_{i_\ell})_{1\leq \ell\leq k}$ is said to be \emph{increasing} if
$x_{i_\ell} >x_{i_{\ell-1}}$ and $y_{i_{\ell}}> y_{i_{\ell-1}}$ for any $1\leq \ell\leq k$ (by convention $i_0=0$ and $z_0=(0,0)$).
The question is  to find the length of the longest increasing sequence among the $m$ points, which is equivalent to finding the length of the longest increasing subsequence of a random (uniform) permutation of length $m$: we let
\begin{equation}\label{HLPP}
\cL_m  = \max \big\{ k\ \colon \exists\, (i_1,\ldots,i_k) \ s.t.\ (Z_{i_\ell})_{1\leq \ell\leq k} \text{ is increasing}\big\}.
\end{equation}

Hammersley  \cite {Ha72} first proved that $m^{-1/2} \cL_m$ converges a.s. and in $L^1$ to some constant, that was believed to be $2$. Then the constant has been proven to be  indeed $2$, see \cite{LS77,VK77}, and estimates related to $\cL_m$ were improved by a series of papers, culminating with a seminal paper by Baik, Deift and Johansson \cite{BDJ99}, showing that $ m^{-1/6} ( \cL_m - 2\sqrt{m}) $
converges in distribution to the Tracy-Widom distribution.

 The main goal of the present article is to define the \emph{Entropy-controlled Last Passage Percolation} (E-LPP),  a natural extension of Hammersley's LPP \eqref{HLPP}. We introduce the concept of \emph{global} (entropy) path constraint, which depends on the structure of the whole path, and is related to the moderate deviation rate function of the simple symmetric random walk.

The E-LPP turns out to be crucial in the analysis of the directed polymer model in a heavy-tailed environment in $(1+1)$-dimension. We refer to \cite{C17, CSY04, FdH07} for the definition of the directed polymer model and a general overview on the main questions. 
Let us stress that among these, a fundamental question is to capture the transversal fluctuations exponent $\xi$ of the polymer. This problem as attracted much attention in recent years,
in particular because the model is in the KPZ universality class: in particular, it is
conjectured that at any fixed inverse temperature $\beta$, the transversal fluctuation exponent is $\xi=2/3$. 
Alberts, Khanin and Quastel \cite{AKQ14a} recently introduced the concept of \emph{intermediate disorder regime} in which $\beta$ scales with $n$, the size of the system. 
In the setting of a heavy-tailed environment, this was considered first by Auffinger-Louidor \cite{AL11}, who showed that rescaling suitably $\beta$, the model has transversal fluctuations of order one, that is $\xi=1$. 
Dey and Zygouras \cite{cf:DZ} then proved that with a different (stronger) rescaling of $\beta$, the model has Brownian fluctuations, that is $\xi=1/2$.
Moreover Dey and Zygouras proposed a phase-diagram picture that connects the exponent of the transversal fluctuation of the polymer $\xi$ with the tail exponent $\alpha$ of the heavy-tailed distribution of the environment and the decay rate of $\gb$. 
In \cite{cf:BT_HT} we start to complete this picture by giving a complete description in the case of $\alpha\in (0,2)$: one of the main results  is a proof of Conjecture 1.7 of \cite{cf:DZ}, describing explicitly the limit, cf.~Theorem~\ref{thm:TbhatTb}.
One crucial tool needed in \cite{cf:BT_HT} is the E-LPP defined below (in the discrete and continuous case), which allows to go beyond the Lipschitz setting of \cite{AL11, HM07}, and treat intermediate transversal fluctuations $1/2<\xi <1$.

Let us highlight that in the related paper \cite{cf:BT_LPP} we investigate further generalizations of Hammersley's LPP problem 
which can bring about new tool and perspectives on this research topic.

 \subsection{Organization of the article}

We state all our results in Section \ref{sec:main}: in Section \ref{sec:defELPP} we give the precise definition of E-LPP and we state our results for the E-LPP in continuous and in discrete settings;
in Section \ref{app1} we consider the problem of E-LPP with heavy-tail weights that appears in \cite{cf:BT_HT}, and we show that the continuous limit in Theorem 2.4 of \cite{cf:BT_HT}
is well defined, completing the proof of \cite[Conjecture 1.7]{cf:DZ}; in Section \ref{sec:OrderedStat} we state the convergence of the discrete energy-entropy variational problem to its continuous counterpart. This result is crucial to prove the convergence in Theorems~2.2--2.7 of \cite{cf:BT_HT}. 
The proofs of the all results are presented in Sections \ref{app:LPP} to \ref{sec:proofdisc}.

\section{Main Results}\label{sec:main}

Operating a rotation by $45^\circ$ clockwise, we may map Hammersley's LPP problem (cf. Section \ref{sec:HammLPP}) to that of the maximal number of points that can be collected by $1$-Lipschitz paths $s:[0,1]\to\bbR$.
We now introduce a new (natural) model where the Lipschitz constraint is replaced by a path entropy constraint.

\subsection{Entropy-controlled LPP}
\label{sec:defELPP}

For $t>0$, and a finite set $\Delta = \big\{ (t_i,x_i) ; 1\le i\le j \big\} \subset [0,t] \times \bbR$ with $|\Delta|=j\in \mathbb N$ and with $0\le t_1\le  t_2\le \cdots \le t_j \le t$, we can define the entropy of $\Delta$ as
\begin{equation}
\label{def:entdelta}
\ent(\Delta) := \frac12 \sum_{i=1}^j \frac{(x_i-x_{i-1})^2}{t_i-t_{i-1}} \, ,
\end{equation}
where we used the convention that $(t_0,x_0)=(0,0)$. If there exists some $1\le i\le j$ such that $t_i=t_{i-1}$ then we set $\ent(\Delta)=+\infty$. This corresponds to the definition \eqref{def:ContinuumEntropy} of the entropy of a continuous path $s:[0,t]\to \mathbb R$, applied to the linear interpolation of the points of $\Delta$:  to any set $\Delta$ we can therefore  canonically associated a (continuous) path  with the same entropy.
The set $\Delta$ is seen as a set of points a path has to go through. For $S=(S_i)_{i\geq 0}$ a simple symmetric random walk on $\mathbb{Z}$, and if  $\Delta \subset \bbN \times \bbZ$, we have that $\bP(\Delta \subset S) \le e^{-\ent(\Delta)}$  ($\Delta\subset S$ means that $S_{t_i}=x_i$ for all $i\le |\Delta|$)---we used that for the simple random walk $\bP(S_i=x) \le e^{- x^2/2i}$ by a standard Chernoff bound argument.

Then, for any fixed $B>0$, we will consider the maximal number of points that can be collected by paths with entropy smaller than $B$, among a random set $\gU_m$ of $m$ points, whose law is denoted $\bbP$.
We now consider two types of problems, depending on how this set $\gU_m$ is constructed:
\begin{itemize}
\item[(i)] \emph{continuous} setting: for $t,x>0$, we consider a domain $\boldsymbol{\Lambda}_{t,x}:=[0,t] \times [-x,x]$, and $\boldsymbol \gU_m = \boldsymbol \gU_m(t,x)= \{Y_1,\ldots, Y_m\}$ where $(Y_i)_{1\le i\le m}$ is a collection of independent r.v.\ chosen uniformly in  $\boldsymbol \Lambda_{t,x}$;

\item[(ii)] \emph{discrete} setting: for $n,h\in \mathbb{N}$, we consider a domain $\Lambda_{n,h} := \llbracket 0,n \rrbracket \times \llbracket -h,h \rrbracket
$,
and $\gU_m  =\gU_m (n,h)= \{Y_1,\ldots, Y_m\}$ is a set of $m$ distinct points taken randomly in~$\Lambda_{n,h}$.
\end{itemize}

We are then able to define the Entropy-controlled LPP by
\begin{equation}\label{def:LcontLdis}
\mathcal{L}_{m}^{(B)}(t,x) = \maxtwo{\Delta \subset \boldsymbol\gU_m(t,x)}{\ent(\Delta)\le B}  \big| \Delta \big| \, , \qquad L_{m}^{(B)}(n,h) = \maxtwo{\Delta \subset \gU_m(n,h)}{\ent(\Delta)\le B}  \big| \Delta \big| \, ,
\end{equation}
the maximal number of points than can be included in a set $\Delta$ that has entropy smaller than~$B$. In other words, it is the maximal number of points in  $\boldsymbol \gU_m$ or $\gU_m$ that can be collected by a path of entropy smaller than $B$. 
note that we use the different font to be able to differentiate the setting: $\mathcal{L}, \boldsymbol \Lambda, \boldsymbol \gU$ for the continuous case and $L, \Lambda, \gU$ for the discrete one.

We show the following result---the lower bound is not needed for our applications, but can be found in~\cite{cf:BT_LPP}.
\begin{theorem}
\label{thm:generalLPP}
There are constants $C_0,c_0, c'_0>0$ such that: for any $t,x,B >0$, $n,h\ge 1$
\smallskip

\emph{(i) continuous setting:} for all $m\ge 1$ and all $k\le m$
\begin{align}
 \bbP \Big( \mathcal{L}_{m}^{(B)} (t,x) \ge k \Big) & \leq   \Big(  \frac{C_0 (Bt/x^2)^{1/2} m  }{k^2}\Big)^k \,  .
 \label{thm31UB}
\end{align}

\emph{(ii) discrete setting:} for all $1\le m\le nh$ and all $k\le m$
\begin{align}
  \bbP \Big(  {L}_{m}^{(B)} (n,h) \ge k \Big) &\leq  \Big(  \frac{C_0 (Bn/h^2)^{1/2} m  }{k^2}\Big)^k \,  .
\end{align}
\end{theorem}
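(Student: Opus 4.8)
The plan is to bound the probability that $\mathcal{L}_m^{(B)}(t,x) \ge k$ by a union bound over $k$-element subsets $\Delta$ of $\boldsymbol\gU_m$ together with an estimate on the probability that a fixed set of $k$ uniformly random points in $\boldsymbol\Lambda_{t,x}$ has entropy at most $B$. Concretely, the event $\{\mathcal{L}_m^{(B)}(t,x)\ge k\}$ means there exist indices $i_1<\cdots<i_k$ such that, after reordering the points $Y_{i_1},\dots,Y_{i_k}$ by their time-coordinates, the resulting set $\Delta$ satisfies $\ent(\Delta)\le B$. Hence
\begin{equation*}
\bbP\big( \mathcal{L}_m^{(B)}(t,x)\ge k\big) \le \binom{m}{k}\, \bbP\big( \ent(\{Y_1,\dots,Y_k\})\le B\big),
\end{equation*}
where the points on the right are $k$ i.i.d.\ uniform points in $\boldsymbol\Lambda_{t,x}$. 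The core of the argument is thus the single estimate $\bbP(\ent(\{Y_1,\dots,Y_k\})\le B)\le (C (Bt/x^2)^{1/2}/k)^k$ (up to constants), and combining it with $\binom{m}{k}\le (em/k)^k$ gives the stated bound with the exponent $k^2$ in the denominator.

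To estimate $\bbP(\ent \le B)$, I would first sort the time coordinates: writing $0 \le t_{(1)}\le \cdots \le t_{(k)}\le t$ for the order statistics and $x_{(1)},\dots,x_{(k)}$ for the correspondingly reordered space coordinates (still i.i.d.\ uniform on $[-x,x]$, independent of the $t_{(i)}$), the entropy is $\frac12\sum_{i=1}^k (x_{(i)}-x_{(i-1)})^2/(t_{(i)}-t_{(i-1)})$. Set $\tau_i := t_{(i)}-t_{(i-1)} \ge 0$ with $\sum \tau_i \le t$. Conditionally on the $\tau_i$, the increments $\Delta x_i := x_{(i)}-x_{(i-1)}$ are not independent, but one can bound the conditional probability that $\frac12\sum \Delta x_i^2/\tau_i \le B$ by a Gaussian/volume computation: the constraint forces $(x_{(1)},\dots,x_{(k)})$ into an ellipsoid whose volume, relative to the cube $[-x,x]^k$, is of order $\prod_i (B\tau_i / x^2)^{1/2}$ up to a combinatorial factor. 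By the arithmetic–geometric mean inequality, $\prod_i \tau_i^{1/2} \le (t/k)^{k/2}$ on the simplex $\sum\tau_i\le t$, so the bound becomes of order $(B t/(k x^2))^{k/2}$ times a factor accounting for the number of ways the ellipsoid constraint can be distributed and for the normalization of the Gaussian integral; a careful version of this gives exactly $(C_0 (Bt/x^2)^{1/2}/k)^{k}$ after absorbing all universal constants into $C_0$. An alternative and perhaps cleaner route is to write $\bbP(\ent\le B) = \bbP(e^{-\lambda\,\ent}\ge e^{-\lambda B}) \le e^{\lambda B}\,\bbE[e^{-\lambda\,\ent}]$ and to compute or bound the Laplace transform $\bbE[e^{-\lambda\,\ent}]$ directly, exploiting the link between $\ent$ and the random-walk Green's function hinted at in the paragraph before the theorem; optimizing over $\lambda$ should reproduce the same power.

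For part (ii), the discrete setting, the strategy is identical: a union bound over $k$-subsets gives $\bbP(L_m^{(B)}(n,h)\ge k)\le \binom{m}{k}\,\bbP(\ent(\{Y_1,\dots,Y_k\})\le B)$, where now the $k$ points are uniformly chosen among distinct points of $\Lambda_{n,h}=\llbracket 0,n\rrbracket\times\llbracket -h,h\rrbracket$. One can either redo the order-statistics computation in the lattice, or — more economically — compare to the continuous case: embed each lattice point $(i,j)$ in the unit cell $[i,i+1)\times[j,j+1)$, observe that rounding changes each time gap by at most $1$ and each space increment by at most $1$, and deduce that $\ent$ of the lattice configuration is comparable (up to bounded multiplicative and additive corrections absorbed into $C_0$, $c_0$) to the entropy of a continuous configuration in $\boldsymbol\Lambda_{n+1,h+1}$. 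Since $(n+1)/(h+1)^2 \le 2 n/h^2$ for $h\ge 1$, this yields the claimed form with $Bn/h^2$ after adjusting constants.

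\textbf{Main obstacle.} The delicate point is the entropy estimate $\bbP(\ent(\{Y_1,\dots,Y_k\})\le B)\le (C_0(Bt/x^2)^{1/2}/k)^k$, specifically the handling of the dependence between the sorted space-increments $\Delta x_i$ and the need to get the $k^{-k}$ saving rather than merely $C^k$. The AM–GM step that produces $\prod_i \tau_i^{1/2}\le (t/k)^{k/2}$ is what creates the $k^{-k/2}$, which combined with the combinatorial $\binom{m}{k}$ gives the $k^{2}$ in the denominator; making the ellipsoid-volume bound (or the Laplace-transform bound) rigorous uniformly in $k$, $t$, $x$, $B$ — including the regime where $B$ is large and the constraint is nearly vacuous — is the part that requires genuine care rather than routine calculation.
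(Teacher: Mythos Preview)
Your approach for part (i) is correct and follows the same overall strategy as the paper: a union bound over $k$-subsets reduces the question to estimating the probability that $k$ i.i.d.\ uniform points in $\boldsymbol\Lambda_{t,x}$ have entropy at most $B$, which is a volume computation. The difference is only in how that volume is bounded. The paper defines $\cE_k^{(t,B)}=\{(t_\ell,x_\ell)_{\ell\le k}: 0<t_1<\cdots<t_k<t,\ \ent\le B\}$ and computes $\mathrm{Vol}(\cE_k^{(t,B)})$ \emph{exactly} via the recursion obtained by integrating out the leftmost point, getting $C_k\,B^{k/2}t^{3k/2}$ with $C_k=\pi^k/\big(\sqrt{2}\,\Gamma(k/2+1)\Gamma(3k/2+1)\big)$; Stirling then gives the $k^{-2k}$. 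Your route---change variables to increments $u_i=x_{(i)}-x_{(i-1)}$, bound the conditional probability by the ellipsoid volume $\pi^{k/2}(2B)^{k/2}\prod_i\sqrt{\tau_i}/\Gamma(k/2+1)$, then use AM--GM on $\prod\tau_i\le (t/k)^k$---is a legitimate shortcut that trades the exact constant for a simpler argument; the $k^{-k/2}$ from $\Gamma(k/2+1)$, the $k^{-k/2}$ from AM--GM, and the $k^{-k}$ from $\binom{m}{k}$ combine to the same $k^{-2k}$. So your ``main obstacle'' is not actually an obstacle: the ellipsoid bound plus AM--GM is already rigorous and uniform in all parameters. The Laplace-transform variant you mention would also work but is not needed.

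For part (ii) your embedding/comparison argument is not convincing as written. The problem is not the without-replacement sampling (that is handled, as the paper does, by $\binom{m}{k}\cdot|E_k^{(n,B)}|/\binom{|\Lambda_{n,h}|}{k}$ with $\binom{|\Lambda_{n,h}|}{k}\ge (2nh)^k/k!$), but the claim that rounding perturbs the entropy only by bounded factors: if two lattice points have consecutive times $i,i+1$, after your embedding the time-gap can be arbitrarily close to $0$, which blows up the corresponding entropy term, so there is no clean two-sided comparison. The paper simply redoes the volume computation in the lattice via the same recursion, replacing integrals by sums and using the monotone comparison $\sum g(i)\le \int g$ (picking up only an extra factor $2^k$). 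That is your ``first option'', and it is the one that goes through without further work.
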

The proof of Theorem~\ref{thm:generalLPP} is not difficult but a bit technical, and we give it in Section~\ref{app:LPP}.
This result shows in particular that $\mathcal L_m^{(B)}(t,x)$ is of order $\big( ( Bt/x^2)^{1/4}  \sqrt{m} \big)\wedge m$, resp.\ $L_m^{(B)}(n,h)$ is of order $ \big( ( Bn/h^2)^{1/4}  \sqrt{m} \big)\wedge m$, as stressed by the following corollary.
We stress that keeping track of the dependence in $B$ is essential for the applications we have in mind.
\begin{corollary}
\label{cor:LPP}
For any $b>0$, there is a constant $c_b>0$ such that, for any $m\ge 1$, and any positive $B$, and any $t,x$, resp.\ $n,h$,
 \[ \begin{split} \bbE\bigg[ \bigg(  \frac{ \mathcal L_m^{(B)}(t,x)  }{\big( ( Bt/x^2)^{1/4}  \sqrt{m} \big)\wedge m } \bigg)^b \bigg] \le c_b \, ; \ \bbE\bigg[ \bigg(  \frac{  L_m^{(B)}(n,h)  }{\big( ( Bn/h^2)^{1/4}  \sqrt{m} \big)\wedge m } \bigg)^b \bigg] \le c_b \, .
 \end{split}\]
\end{corollary}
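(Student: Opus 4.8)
The plan is to read off the two moment bounds from the tail estimates of Theorem~\ref{thm:generalLPP} by a layer-cake integration. The decisive point is uniformity of the constant, and this is built into Theorem~\ref{thm:generalLPP}: because the dependence on $Bt/x^2$ (resp.\ $Bn/h^2$) appears explicitly there, the substitution performed below wipes out every parameter. I write out the continuous case; the discrete one is identical with $Bn/h^2$ in place of $Bt/x^2$, the constraint $m\le nh$ being irrelevant.

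Write $a:=\big((Bt/x^2)^{1/4}\sqrt m\big)\wedge m$ for the normalising factor and distinguish three ranges. If $a=m$, then $\mathcal L_m^{(B)}(t,x)\le|\boldsymbol\gU_m(t,x)|=m$, so $\mathcal L_m^{(B)}(t,x)/a\le1$ and the bound is trivial. If $(Bt/x^2)^{1/4}\sqrt m\le1$ (so $a=(Bt/x^2)^{1/4}\sqrt m$ and $a^2=(Bt/x^2)^{1/2}m\le1$), one reads the normalising factor as $1$, as is implicit, and then summing $\bbP(\mathcal L_m^{(B)}(t,x)\ge k)\le\big(C_0(Bt/x^2)^{1/2}m/k^2\big)^k$ over $k\ge1$ --- the $k=1$ term being at most $C_0$, the remaining ones summing geometrically --- bounds $\bbE[(\mathcal L_m^{(B)}(t,x))^b]$ by a constant depending only on $b$. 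So the work is in the main range $1\le(Bt/x^2)^{1/4}\sqrt m\le m$, where $a\ge1$ and $a^2=(Bt/x^2)^{1/2}m$.

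In that range I would substitute $k=\theta a$ (rigorously $\lceil\theta a\rceil$, which only improves the bound) into \eqref{thm31UB}: for every $\theta\ge1$,
\[
\bbP\big(\mathcal L_m^{(B)}(t,x)\ge\theta a\big)\le\Big(\frac{C_0(Bt/x^2)^{1/2}m}{(\theta a)^2}\Big)^{\theta a}=\Big(\frac{C_0}{\theta^2}\Big)^{\theta a}.
\]
Fix $\theta_0:=\max\{1,\sqrt{2C_0}\}$; for $\theta\ge\theta_0$ one has $C_0/\theta^2\le\tfrac12$, hence, using $a\ge1$,
\[
\bbP\big(\mathcal L_m^{(B)}(t,x)/a\ge\theta\big)\le 2^{-\theta a}\le 2^{-\theta}.
\]
The layer-cake formula then yields
\[
\bbE\Big[\big(\mathcal L_m^{(B)}(t,x)/a\big)^b\Big]=\int_0^\infty\bbP\big(\mathcal L_m^{(B)}(t,x)/a\ge u^{1/b}\big)\,\dd u\le\theta_0^{\,b}+\int_{\theta_0^{\,b}}^{\infty}2^{-u^{1/b}}\,\dd u=:c_b<\infty,
\]
and $c_b$ depends only on $b$ and the universal constant $C_0$; the discrete inequality follows verbatim, using the bound for $L_m^{(B)}(n,h)$.

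The main obstacle is not any individual estimate --- each is a one-line Chernoff-type consequence of Theorem~\ref{thm:generalLPP} --- but the bookkeeping needed to guarantee uniformity: checking that the three ranges exhaust all admissible $(B,t,x,m)$ (resp.\ $(B,n,h,m)$) and, crucially, that in the main range the choice $k=\theta a$ cancels all dependence on $B,t,x$ and $m$, leaving a constant in $b$ alone. This is exactly why it mattered to state Theorem~\ref{thm:generalLPP} keeping track of the factors $Bt/x^2$ and $Bn/h^2$; the integer-valuedness of $\mathcal L_m^{(B)}$ and $L_m^{(B)}$ is the only other thing to watch, and it is absorbed by the ceiling $\lceil\theta a\rceil$.
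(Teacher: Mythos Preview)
Your argument is essentially the paper's: substitute $k=\theta a$ into Theorem~\ref{thm:generalLPP} to get the parameter-free tail bound $(C_0/\theta^2)^{\theta a}\le 2^{-\theta}$ (the paper writes $e^{-\theta a}$), then integrate by layer-cake; the paper is simply less explicit about the case split on $a$. Your remark on the regime $a<1$ flags a genuine subtlety that the paper's own proof also glosses over --- reading the normaliser as $1$ there is the natural fix, though it is not literally what the corollary states.
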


%

\begin{remark}\rm
On may view Theorem \ref{thm:generalLPP} as a generalization of \cite[Proposition 3.3]{HM07}. 
More precisely, we recover \cite[Proposition 3.3]{HM07} by considering $\Lambda_{n,n}=\llbracket n,n \rrbracket^2$ and replacing the entropy condition $\ent(\Delta)\le B$ by a \emph{Lipschitz} condition, that is considering only the sets $\Delta$ whose points can be interpolated using a Lipschitz path. Let us denote $L_m^{(\text{Lip})}(n)$ the LPP obtained. Now observe that if $\Delta$ satisfies the Lipschitz condition we have that $\ent(\Delta)\le n/2$ (recall the definition \eqref{def:entdelta}): as a consequence it holds that $L_m^{(n/2)}(n,n)\ge L_m^{(\text{Lip})}(n)$.
We also stress that our definition of E-LPP opens the way to many extensions: in particular as soon as one is able to properly define the entropy of a path (\textit{i.e.}\ of a set $\Delta$), one could extend the results to the case of  paths with unbounded jumps or even non-directed paths: this is the object of \cite{cf:BT_LPP}, where a general notion of path-constrained LPP is developed and studied.
\end{remark}

Let us stress here that one might want to reverse the point of view, and estimate the minimal entropy needed for a path to visit at least $k$ points. This turns out to be essential in Section 4 of \cite{cf:BT_HT}. One realizes that
\[   \inftwo{\Delta \subset \gU_m }{ |\Delta| \ge k} \ent (\Delta) \le B \qquad \Longleftrightarrow \qquad \suptwo{\Delta\subset \gU_m}{ \ent(\Delta) \le B} |\Delta| \ge k \, . \]
Hence, an easy consequence of Theorem \ref{thm:generalLPP} is that for any $k\le n$ (we state it only in the discrete setting)
\begin{equation}
\label{eq:infentropy}
\bbP\Big( \inf_{\Delta \subset \gU_m , |\Delta| \ge k} \ent (\Delta) \le B \Big) \le \Big(  \frac{C_0 (Bn/h^2)^{1/2} m  }{k^2}\Big)^k \, .
\end{equation}
It therefore says that, with high probability, a path that collects $k$ points in $\gU_m \subset \Lambda_{n,h}$ has an entropy larger than  a constant times $k^4/m^2 \times h^2/n $.

\subsection{Application I: continuous E-LPP with heavy-tail weights}
\label{app1}

In \cite{cf:BT_HT} we  prove the convergence of the directed polymer model in heavy-tail environment (suitably rescaled) to a continuous energy-entropy variational problem $\cT_\beta$, defined below in \eqref{def:T} (or in Section~2.2 of \cite{cf:BT_HT}).
A first application of our E-LPP is to show that this variational problem is well-defined when the tail decay exponent $\alpha$ is in $(1/2,2)$: this is Theorem~\ref{thm:TbhatTb}, which proves the first part of \cite[Conjecture~1.7]{cf:DZ}. The second part of this conjecture, \textit{i.e.}\ that $\cT_{\gb}$ is indeed the scaling limit of the directed polymer in heavy-tail environment, is proved in \cite[Theorem~2.4]{cf:BT_HT}. 

\smallskip


Let us recall some notations from Section~2.2 in \cite{cf:BT_HT}.
The set of allowed paths (scaling limits of random walk trajectories) is
\begin{equation}
\label{def:D}
\sD := \big\{ s: [0,1]\to \bbR\ ;\ s \text{ continuous and a.e.\ differentiable} \big\}\, ,
\end{equation}
and the (continuum) \emph{entropy} of a path $s\in \sD$ is defined by
\begin{equation}
\label{def:ContinuumEntropy}
\ent(s) = \frac12 \int_0^1 \big( s'(t) \big)^2 dt \, .
\end{equation}
This last definition derives from the rate function of the moderate deviation of the 
simple random walk (see \cite{S67} or \cite[Eq. (2.14)]{cf:BT_HT}).

We let $\cP:=\{(w_i,t_i,x_i)\}_{ i\geq 1}$ be a Poisson Point Process on  $[0,\infty)\times[0,1]\times\mathbb R $
of intensity $\mu(\dd w \dd t \dd x)=\frac{\alpha}{2} w^{-\alpha-1}\ind_{\{w>0\}}\dd w \dd t \dd x$, where $\alpha\in (0,2)$.
For a quenched realization of $\cP$, the \emph{energy} of a continuous path $s\in\sD$ is then defined by
\begin{equation}
\label{def:discrCont}
\pi(s) =\pi_{\cP}(s):=\sum_{(w,t,x)\in \cP} \, w \,\ind_{\{(t,x)\in s\}},
\end{equation}
where $(t,x)\in s$ means that $(t,x)$ is visited by the path $s$, that is $s_t=x$.

Using \eqref{def:ContinuumEntropy} and \eqref{def:discrCont} we define the \emph{energy--entropy competition variational problem}: 
for any $\gb\geq 0$ we let
\begin{equation}
\label{def:T}
\mathcal T_{\gb}  := \sup_{s\in \sD, \ent(s) <+\infty} \Big\{ \gb \pi(s) -\ent(s) \Big\} .
\end{equation}
The next result shows that it is well defined, and gives some of its properties.
\begin{theorem}\label{thm:TbhatTb}
For $\ga\in (1/2,2)$ we have the scaling relation
\begin{equation}
\label{scalingrelation}
 \cT_{\gb} \stackrel{(\dd)}{=} \gb^{\tfrac{2\ga}{2\ga -1}}\,  \cT_{1} ,
\end{equation}
and $\cT_{\gb}\in (0,+\infty)$ for all $\gb>0$ a.s. Moreover, $\bbE\big[ (\cT_{\gb})^{\upsilon} \big] < \infty$ for any  $\upsilon< \ga-1/2$.
We also have that a.s.\ the map $\gb\mapsto \cT_{\gb}$ is continuous, and that the supremum in~\eqref{def:T} is attained by some unique continuous path $s_{\gb}^*$ with $\ent(s_{\gb}^*)<\infty $.

On the other hand, for  $\ga\in(0,1/2]$ we have $\cT_{\gb} =+\infty$ for all $\gb>0$ a.s. 
\end{theorem}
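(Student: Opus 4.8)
\emph{Scaling relation and positivity.} I would first establish the scaling relation \eqref{scalingrelation} by combining a deterministic change of variables with the scale–invariance of $\cP$. For $a>0$ the map $T_a\colon(w,t,x)\mapsto(aw,t,a^{\ga}x)$ leaves the intensity $\mu$ invariant, so $T_a(\cP)\stackrel{(\dd)}{=}\cP$; substituting $s=a^{\ga}\sigma$ in \eqref{def:T} and using $\ent(a^\ga\sigma)=a^{2\ga}\ent(\sigma)$ and $\{(t,a^\ga x)\in a^\ga\sigma\}=\{(t,x)\in\sigma\}$ gives the pointwise identity $\cT_\gb(T_a\cP)=a^{2\ga}\,\cT_{\gb a^{1-2\ga}}(\cP)$, hence $\cT_\gb\stackrel{(\dd)}{=}a^{2\ga}\cT_{\gb a^{1-2\ga}}$ for every $a>0$; taking $a=\gb^{1/(2\ga-1)}$ (legitimate since $\ga>1/2$) yields \eqref{scalingrelation}. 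For the strict positivity I would note that $\gb\mapsto\cT_\gb$ is non-decreasing and $\cT_\gb\ge0$ a.s.\ (testing $s\equiv0$); moreover a.s.\ $\cP$ realises its (a.s.\ finite, positive) maximal weight $W^\ast$ over $(t,x)\in[\tfrac12,1]\times[-1,1]$, and the path interpolating $(0,0)$ and that point linearly and constant afterwards has entropy $\le1$, so $\cT_\gb\ge\gb W^\ast-1$ and $\bbP(\cT_\gb>0)\ge\bbP(\gb W^\ast>1)\to1$ as $\gb\to\infty$; since \eqref{scalingrelation} forces $\bbP(\cT_\gb>0)=\bbP(\cT_1>0)$ for every $\gb$, this gives $\cT_\gb>0$ a.s.

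\emph{Finiteness and moments (the hard part).} This is where Theorem~\ref{thm:generalLPP} and Corollary~\ref{cor:LPP} are the essential tool. Any $s$ with $\ent(s)\le B$ has $\sup_{[0,1]}|s|\le\sqrt{2B}$, so only the points of $\cP$ in the strip $\boldsymbol\Lambda_{1,\sqrt{2B}}$ are relevant. Splitting the weights into dyadic blocks $\cP_i=\{w\in[2^i,2^{i+1})\}$, one has $\pi(s)\le\sum_i 2^{i+1}\,\#\{\text{points of }\cP_i\cap\boldsymbol\Lambda_{1,\sqrt{2B}}\text{ collected by }s\}$, and each count is at most $\cL_{m_{i,B}}^{(B)}(1,\sqrt{2B})$ with $m_{i,B}=\#(\cP_i\cap\boldsymbol\Lambda_{1,\sqrt{2B}})$; conditionally on $m_{i,B}$ these points are uniform in the strip, so Theorem~\ref{thm:generalLPP}(i) and Corollary~\ref{cor:LPP} apply, and since $\big(B/(\sqrt{2B})^{2}\big)^{1/4}$ is a constant the $B$–dependence of the E-LPP disappears, i.e.\ $\cL_{m}^{(B)}(1,\sqrt{2B})$ is of order $\sqrt{m}$. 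As $\bbE[m_{i,B}]$ is of order $2^{-i\ga}\sqrt{B}$, scale $i$ contributes (in mean) at most a constant times $2^{i}\,2^{-i\ga/2}B^{1/4}=2^{i(1-\ga/2)}B^{1/4}$, and because $1-\ga/2>0$ the sum over $i$ is geometric with dominant term at $2^{i}$ of order $B^{1/(2\ga)}$, giving $\sup_{\ent(s)\le B}\pi(s)$ of order $B^{1/(2\ga)}$; as $1/(2\ga)<1$, $\gb\sup_{\ent(s)\le B}\pi(s)-B/2$ stays bounded over $B$ by a constant times $\gb^{2\ga/(2\ga-1)}$. Discretising $B$ over $\{2^{j}\}_{j\in\bbZ}$, controlling the Poissonian fluctuations of the $m_{i,B}$ and — crucially — the heavy tail of the single largest weight inside the strip (which is exactly where the exponent $\ga-1/2$ is produced), a Borel--Cantelli argument shows that a.s.\ only finitely many levels $j$ exceed $\cT_\gb$, hence $\cT_\gb<+\infty$ a.s.; the quantitative form of the same bounds gives $\bbP(\cT_\gb>\gl)\le C\gl^{-(\ga-1/2)}$ for large $\gl$, whence $\bbE[\cT_\gb^{\upsilon}]<\infty$ for every $\upsilon<\ga-1/2$.

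\emph{Continuity and the maximiser.} Continuity then comes for free: $\gb\mapsto\cT_\gb$ is a supremum of the affine maps $\gb\mapsto\gb\pi(s)-\ent(s)$, hence convex, and being a.s.\ finite on all of $(0,\infty)$ (by monotonicity and the previous step) it is a.s.\ continuous there. For the optimiser, by the finiteness step the supremum in \eqref{def:T} is a.s.\ unchanged when restricted to $\{s:\ent(s)\le B_0\}$ for some random $B_0<\infty$; with paths pinned at the origin this set is a bounded closed convex — hence weakly compact — subset of $H^{1}([0,1])$, which embeds compactly into $C([0,1])$, the map $-\ent$ is weakly upper semicontinuous, and $\pi$ is upper semicontinuous for uniform convergence (only finitely many points of $\cP$ lie in the relevant strip and $\limsup_n\ind_{\{s_n(t)=x\}}\le\ind_{\{s(t)=x\}}$ whenever $s_n\to s$ uniformly), so a maximiser exists. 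For uniqueness I would use that an optimal path may be taken to be the unique minimal-entropy interpolation of the finite set $\Delta^\ast$ of points it visits, that $\cT_\gb=\max_{\Delta}\{\gb\,w(\Delta)-\ent_{\min}(\Delta)\}$ is a maximum over the finitely many subsets $\Delta$ of the (finitely many) points of $\cP$ in the strip, and that, the weights being absolutely continuous, a.s.\ the maximiser $\Delta^\ast$ is unique; hence $s^\ast_\gb$ is unique.

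\emph{The regime $\ga\in(0,1/2]$.} Finally, for $\ga\le1/2$ the single-point bound already forces $\cT_\gb=+\infty$: for every $\gl>0$,
\[
\mu\Big(\big\{(w,t,x)\colon \gb w>\gl+\tfrac{x^{2}}{2t},\ t\in[\tfrac12,1]\big\}\Big)=\frac{\gb^{\ga}}{2}\int_{1/2}^{1}\!\!\int_{\bbR}\Big(\gl+\tfrac{x^{2}}{2t}\Big)^{-\ga}\dd x\,\dd t=+\infty,
\]
because $\int_{\bbR}|x|^{-2\ga}\,\dd x=+\infty$ when $2\ga\le1$. Thus a.s.\ $\cP$ contains, for each $\gl\in\bbN$, a point $(w,t,x)$ with $\gb w-x^{2}/(2t)>\gl$, whose single-point path already yields $\cT_\gb>\gl$; intersecting over $\gl\in\bbN$ gives $\cT_\gb=+\infty$ a.s. The main obstacle throughout is the second step: summing the per-scale E-LPP bounds uniformly over the dyadic entropy levels while tracking the random fluctuations, so as both to preserve the cancellation $\gb B^{1/(2\ga)}-B/2=O(\gb^{2\ga/(2\ga-1)})$ and to produce the sharp moment exponent $\ga-1/2$.
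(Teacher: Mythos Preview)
Your overall strategy is sound and in several places cleaner than the paper's. The scaling argument is essentially the paper's (just reparametrised); your positivity argument via the scaling identity $\bbP(\cT_\gb>0)=\bbP(\cT_1>0)$ is neater than the paper's Borel--Cantelli over dyadic strips; your treatment of $\ga\le 1/2$ via the divergence of $\int_{\bbR}(\gl+x^2/2t)^{-\ga}\dd x$ is a perfectly good alternative to the paper's; and your observation that $\gb\mapsto\cT_\gb$ is a supremum of affine functions, hence convex, hence continuous once finite, genuinely shortcuts the paper's separate left/right-continuity proof. For the moment bound, the paper decomposes the weights dyadically by \emph{rank} (ordered statistics) and pulls the $B$-dependence out via the scaling $\sup_{\ent(s)\le B}\pi(s)\stackrel{(\dd)}{=}B^{1/(2\ga)}\sup_{\ent(s)\le 1}\pi(s)$, whereas you decompose by weight \emph{value} and keep $B$ explicit throughout; both routes land on the same E-LPP estimate and the same tail exponent.

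There is, however, a real gap in your existence/uniqueness step. You assert that ``only finitely many points of $\cP$ lie in the relevant strip'', but this is false: the weight intensity $\tfrac{\ga}{2}w^{-\ga-1}\dd w$ is not integrable at $0$, so any box $[0,1]\times[-\sqrt{2B_0},\sqrt{2B_0}]$ contains a.s.\ infinitely many Poisson points. This breaks two things. First, your upper semicontinuity of $\pi$: the pointwise inequality $\limsup_n\ind_{\{s_n(t)=x\}}\le\ind_{\{s(t)=x\}}$ is fine, but summing it over infinitely many $(M_i,Y_i)$ to get $\limsup_n\pi(s_n)\le\pi(s)$ is an illegal interchange (and $\sum_i M_i=+\infty$ a.s.\ when $\ga\le 1$, so there is no dominating series). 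Second, your uniqueness argument via ``finitely many subsets $\Delta$'' collapses for the same reason. The paper repairs both points with one extra ingredient: using the E-LPP bound (Corollary~\ref{cor:LPP}) and an Abel-summation trick, it shows $\sup_{\ent(s)\le B_0}\big|\pi(s)-\pi^{(\ell)}(s)\big|\to 0$ a.s.\ as $\ell\to\infty$, where $\pi^{(\ell)}$ keeps only the $\ell$ largest weights. Then each $\pi^{(\ell)}$ is trivially u.s.c.\ (finitely many points) and the uniform limit is u.s.c.; for uniqueness, the paper does not reduce to finitely many points at all but argues that two distinct maximisers would force $\gb M_{i_0}$ to equal a quantity measurable with respect to $(Y_j)_j$ and $(M_j)_{j\ne i_0}$, which has probability zero since $M_{i_0}$ has a continuous conditional law.
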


\begin{remark}\rm
\label{rem:uniqueness}
As we discuss in Section 2.5 of \cite{cf:BT_HT}, the fact that the maximizer of $\cT_\beta$ is unique could be used to show the concentration of the paths around $s_{\gb}^{*}$ under the polymer measure $\bP_{n,\beta_n}^\omega$, in analogy with the result obtained by Auffinger and Louidor in Theorem 2.1 of \cite{AL11}.
%
\end{remark}

\subsection{Application II: discrete E-LPP with heavy-tail weigths}
\label{sec:OrderedStat}

In this section we discuss the convergence of a discrete energy-entropy variational problem $T_{n,h}^{\gb_{n,h}}$ defined below \eqref{def:discreteELPP}, to its continuous counterpart $\cT_\beta$ \eqref{def:T}. This is a crucial result that we need in \cite{cf:BT_HT} to prove Theorems 2.4--2.7. 

We introduce the discrete field $\{ \omega_{i,x} ; {(i,x)\in \mathbb N\times\mathbb Z}\}$, which are i.i.d.\ non-negative random variables of law $\mathbb P$: there is some slowly varying function $L(\cdot)$ and some $\ga>0$ such that
\begin{equation}\label{eq:disorder}
\mathbb P\big(\omega >x\big)=L(x) x^{-\alpha}\, .
\end{equation}
This random field is the discrete counterpart of the Poisson Point Process~$\cP$ introduced in Section \ref{app1}. We refer to Section \ref{contLimitHT} for further details.

Let us consider $F(x) = \bbP(\go\le x)$ be the disorder distribution, cf. \eqref{eq:disorder}, and define the function $m(x)$ by
\begin{equation}
\label{def:m}
m(x) := F^{-1} \big(1-\tfrac1x \big), \qquad \text{so  }\ \bbP \big( \go >m (x) \big) = 1/x  .
\end{equation}
The second identity characterizes $m(x)$ up to asymptotic equivalence: we have that $m(\cdot)$ is a regularly varying function with exponent $1/\ga$.

\smallskip

%

For any given box $\Lambda_{n,h} = \llbracket 1,n \rrbracket \times \llbracket -h,h \rrbracket$ we can rewrite the discrete field in this region $(\omega_{i,x})_{(i,x)\in \Lambda_{n,h}}$ using the \emph{ordered statistic}: we let 
$M_r^{(n,h)}$ be the $r$-th largest value of $(\omega_{i,x})_{(i,x)\in \Lambda_{n,h}}$ and $Y_r^{(n,h)}\in \Lambda_{n,h}$ its position---note that $(Y_r^{(n,h)})_{r=1}^{|\Lambda_{n,h}|}$ is simply a random permutation of the points of $\Lambda_{n,h}$. 
In such a way 
\begin{equation}\label{eq:ordstat}
(\omega_{i,j})_{(i,j)\in \Lambda_n} = (M_r^{(n,h)},Y_r^{(n,h)})_{r=1}^{|\Lambda_{n,h}|} \, .
\end{equation}
In the following we refer to $(M_r^{(n,h)})_{r=1}^{|\Lambda_{n,h}|}$ as the \emph{weight} sequence. 
We now define the energy collected by a set $\Delta \subset \Lambda_{n,h}$ 
and its contribution by the first $\ell$ weights (with $1\le \ell \le |\Lambda_{n,h}|$) as follows
\begin{equation}
\label{def:Omega}
\Omega_{n,h} (\Delta)  := \sum_{r=1}^{|\Lambda_{n,h}|} M_r^{(n,h)} \ind_{\{ Y_r^{(n,h)} \in \Delta\}}\, ; \  \ \Omega_{n,h}^{(\ell)} (\Delta)  := \sum_{r=1}^{\ell} M_r^{(n,h)} \ind_{\{ Y_r^{(n,h)} \in \Delta\}}\, .
\end{equation}
We also set $ \Omega_{n,h}^{(>\ell)} (\Delta) :=  \Omega_{n,h} (\Delta)-  \Omega_{n,h}^{(\ell)}(\Delta)$.

In such a way we can define the (discrete) variational problem
\begin{equation}
\label{def:discreteELPP}
T_{n,h}^{\gb_{n,h}} := \max_{ \Delta \subset \Lambda_{n,h}} \big\{  \gb_{n,h} \Omega_{n,h} (\Delta) - \ent(\Delta)  \big\} \, ,
\end{equation}
with $\gb_{n,h}$ some  function of $n,h$ (soon to be specified), and $\ent(\Delta)$ as defined in~\eqref{def:entdelta}.
We also define analogues of \eqref{def:discreteELPP} with a restriction to the $\ell$ largest weights, or beyond the $\ell$-th  weight
\begin{equation}
\label{def:discrELPPell}
\begin{split}
T_{n,h}^{\gb_{n,h},(\ell)} &:= \max_{ \Delta \subset \Lambda_{n,h}} \big\{  \gb_{n,h} \Omega_{n,h}^{(\ell)} (\Delta) - \ent(\Delta)  \big\} \, ,\\
T_{n,h}^{\gb_{n,h},(>\ell)} &:= \max_{ \Delta \subset \Lambda_{n,h}} \big\{  \gb_{n,h} \Omega_{n,h}^{(>\ell)} (\Delta) - \ent(\Delta)  \big\} \, .
\end{split}
\end{equation}

The following proposition is crucial for the proof of Theorem~\ref{prop:ConvVP} below, and is also a central tool  in \cite[Section~4]{cf:BT_HT}.
\begin{proposition}
\label{thm:discreteELPP}
The following hold true:

\textbullet\ For any $a<\ga$, there is a constant $c_a>0$ such that for any $1\le \ell\le nh$, for any $b > 1$
\begin{equation}
\bbP \Big(   T_{n,h}^{\gb_{n,h}, (\ell)} \ge b \times  {\big( \gb_{n,h} m(nh) \big)^{4/3}}{ \Big(\frac{n}{h^2}\Big)^{1/3}}   \Big) \le  c_a\,  b^{-3a/4}\, \label{smallthanell}.
\end{equation}

\textbullet\ We also have that there is a constant $c>0$ such that for any $b> 1$
\begin{equation}
\bbP \Big(   T_{n,h}^{\gb_{n,h}, (>\ell)} \ge b \times { \big( \gb_{n,h} m(nh/\ell) \big)^{4/3}}{\Big(\frac{\ell^2n}{h^2} \Big)^{1/3}}      \Big) \le  c b^{-\ga \ell/4} + e^{-c b^{1/4}}\, .\label{bigthanell}
\end{equation}

\end{proposition}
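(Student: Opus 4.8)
The plan is to decompose the energy $\Omega_{n,h}(\Delta)$ according to the ordered statistics and estimate, for each level $r$, the number of points of $\Delta$ among the $r$ largest weights using the E-LPP tail bound \eqref{eq:infentropy}. The key observation is that for a set $\Delta$ with $\ent(\Delta)\le B$, Theorem~\ref{thm:generalLPP}(ii) gives control on $|\Delta \cap \{Y_1,\dots,Y_\ell\}|$: applied with $m=\ell$, the number of the $\ell$ largest-weight points that can be collected with entropy $\le B$ is typically of order $(B n/h^2)^{1/4}\sqrt{\ell}$. So the plan is to fix the entropy level $B$ (we will optimize over a dyadic grid of values of $B$ at the end), bound $\Omega_{n,h}^{(\ell)}(\Delta)\le \sum_{r=1}^{\ell} M_r^{(n,h)} \ind_{\{Y_r\in\Delta\}}$, and then perform an Abel summation: writing $N_r := |\Delta \cap \{Y_1,\dots,Y_r\}|$, we get $\Omega_{n,h}^{(\ell)}(\Delta) \le \sum_{r=1}^\ell (M_r - M_{r+1}) N_r + M_{\ell+1} N_\ell$ (with $M_{\ell+1}:=0$ or handled separately). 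Since $M_r^{(n,h)} \approx m(nh/r)$ by classical extreme-value asymptotics for the ordered statistics of regularly varying i.i.d.\ variables, and $N_r \le \min(r, c(Bn/h^2)^{1/4}\sqrt r)$ with high probability simultaneously over $r$ (union bound over $r\le \ell$ using \eqref{eq:infentropy}), one obtains a deterministic-looking bound $\gb_{n,h}\Omega_{n,h}^{(\ell)}(\Delta) - B \lesssim \gb_{n,h} \sum_{r} \big(m(nh/r)-m(nh/(r+1))\big)(Bn/h^2)^{1/4}\sqrt r - B$.

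The second step is to carry out the resulting sum/integral and optimize over $B$. The sum $\sum_r (M_r - M_{r+1})\sqrt r$ behaves like $\int_1^\ell m(nh/r) r^{-1/2}\,dr$; since $m$ is regularly varying with index $1/\alpha<2$, this integral is dominated by its upper end $r\sim\ell$ and is of order $m(nh/\ell)\sqrt\ell$ (here one needs $\alpha>1/2$ so that $1/\alpha < 2$; this is consistent with the hypotheses appearing in Theorem~\ref{thm:TbhatTb}, though Proposition~\ref{thm:discreteELPP} as stated asks only $a<\alpha$ and one should check the statement is vacuous/adapted accordingly for small $\alpha$ — in the regime of interest $m(nh/\ell)\sqrt\ell$ is the right order). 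Thus $\gb_{n,h}\Omega_{n,h}^{(\ell)}(\Delta) - \ent(\Delta) \lesssim \gb_{n,h} m(nh/\ell)\sqrt\ell\, (Bn/h^2)^{1/4} - B$ uniformly over $\Delta$ with $\ent(\Delta)\in[B/2,B]$; optimizing the right-hand side over $B>0$ gives a maximum of order $\big(\gb_{n,h} m(nh/\ell)\big)^{4/3}(\ell^2 n/h^2)^{1/3}$ for the $(>\ell)$-type quantity after relabelling, and of order $\big(\gb_{n,h} m(nh)\big)^{4/3}(n/h^2)^{1/3}$ for the full-range quantity with $\ell$-weight restriction when one uses $M_r\le M_1 \approx m(nh)$ crudely on the first few weights. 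For \eqref{smallthanell}: one needs the excess-probability bound $c_a b^{-3a/4}$; this comes from the fact that $M_1^{(n,h)}=m(nh)$ only up to a random factor with a Fréchet-type tail (moments of order $<\alpha$), and from the power $4/3$ converting an $\alpha$-moment input into a $3a/4$ exponent; the dyadic union bound over $B$ and over $r$ contributes only constants. For \eqref{bigthanell}: beyond the $\ell$-th weight all the $M_r$ with $r>\ell$ are bounded by $M_{\ell+1}$, and now the relevant E-LPP count $N_r - N_\ell$ for $r>\ell$ is, with probability $1-(C_0(Bn/h^2)^{1/2}\ell/k^2)^k$, of order $(Bn/h^2)^{1/4}\sqrt{r}$; the improved exponent $b^{-\alpha\ell/4}$ (as opposed to $b^{-3a/4}$) comes precisely from the $k$-th power structure in \eqref{eq:infentropy} with $k\sim \ell$, i.e.\ one is collecting at least $\ell$ points, and the additional $e^{-cb^{1/4}}$ term is the contribution of the event that $M_{\ell+1}$ itself is atypically large (an i.i.d.\ maximum over $nh$ sites has a stretched-exponential upper tail after the natural rescaling, which after the $4/3$ power becomes $e^{-cb^{1/4}}$).

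The main obstacle I expect is the bookkeeping in the two-parameter optimization — simultaneously over the entropy level $B$ (via a dyadic decomposition $B\in[2^j,2^{j+1})$) and over the truncation level $r$ in the Abel summation — while keeping the tail bounds summable. One must be careful that the union bound over the $O(\log)$ dyadic values of $B$ and over the $O(\ell)$ or $O(nh)$ values of $r$ does not destroy the clean power-law tail; this is handled because \eqref{eq:infentropy} decays super-exponentially in $k$ and one chooses $k = b^{1/4}\times(\text{order of }N_r)$ at the optimal $B$, so each term is at most $(C/b)^{k}$ and the sum over $r$ and $j$ is absorbed. A secondary technical point is replacing the exact ordered statistics $M_r^{(n,h)}$ by the deterministic envelope $m(nh/r)$: one should either use the classical coupling $M_r^{(n,h)} \le m(nh/r) \cdot E_r$ with $(E_r)$ having controlled tails (Rényi representation for exponentials composed with the regularly varying inverse), or simply bound $\sum_{r\le k} M_r^{(n,h)}$ directly by $k \, m(nh/k)$ times a random factor with an $\alpha$-moment, which is cruder but sufficient. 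Finally one must verify the regular-variation estimate $\int_1^\ell m(nh/r)r^{-1/2}dr \asymp m(nh/\ell)\sqrt\ell$, which is a standard application of Potter's bounds / Karamata's theorem, valid because the exponent $1/\alpha$ of $m$ satisfies $1/\alpha<2$ in the relevant regime (and for the full-range statement one instead uses that the integral near $r=1$ contributes $O(m(nh))$, again by Karamata, giving the stated scaling with $m(nh)$).
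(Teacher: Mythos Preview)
Your overall strategy---dyadic decomposition over the entropy level $B$, summation by parts over the ordered statistics, and separate control of the weight sizes $M_r^{(n,h)}$ and the E-LPP counts $N_r$---is correct and is essentially the paper's proof. The paper packages the Abel summation into dyadic blocks, writing $\Omega_{n,h}^{(\ell)}(\Delta)\le \sum_{j\le \log_2\ell} M_{2^j}^{(n,h)} L_{2^{j+1}}^{(B)}$ (and the analogous sum with indices $2^j\ell$ for $\Omega^{(>\ell)}$), and controls $M_r^{(n,h)}/m(nh/r)$ via the elementary union bound $\bbP\big(M_r^{(n,h)}>t\,m(nh/r)\big)\le (ct)^{-(1-\eta)\ga r}$ (Lemma~\ref{lem:Mell}) rather than a R\'enyi coupling; this is equivalent to what you propose.

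There is, however, a genuine mix-up in your treatment of \eqref{bigthanell}: you have the sources of the two tail terms reversed. The power-law $b^{-\ga\ell/4}$ does \emph{not} come from the E-LPP bound with $k\sim\ell$; it comes from the weight tail. Indeed $\bbP\big(M_{2^j\ell}^{(n,h)}>t\,m(nh/2^j\ell)\big)\le (ct)^{-(1-\eta)\ga\, 2^j\ell}$ because at least $2^j\ell$ of the $\omega$'s must exceed the threshold, and this is what produces the exponent proportional to $\ell$. Conversely, the stretched-exponential $e^{-cb^{1/4}}$ comes from the E-LPP tail: Theorem~\ref{thm:generalLPP}(ii) gives $\bbP\big(L_{r}^{(B)}\ge u\,(Bn/h^2)^{1/4}\sqrt r\big)\le e^{-cu\sqrt r\,(Bn/h^2)^{1/4}}$, and after matching $B$ to $b\hat\gb_\ell$ and the dyadic sum this yields the $e^{-cb^{1/4}}$. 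Your claim that ``$M_{\ell+1}$ has a stretched-exponential upper tail'' is incorrect---the $(\ell+1)$-th order statistic has a power-law tail with exponent $\sim\ga\ell$, not a stretched exponential.

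A smaller slip: for $\ga<2$ the integrand $m(nh/r)r^{-1/2}\sim (nh)^{1/\ga}r^{-1/\ga-1/2}$ has exponent below $-1$, so $\int_1^\ell m(nh/r)r^{-1/2}\,dr$ is dominated by its \emph{lower} end $r\sim 1$ and is $\asymp m(nh)$ (this is what \eqref{smallthanell} needs), while $\int_\ell^{nh}$ is dominated by $r\sim\ell$ and gives $m(nh/\ell)\sqrt\ell$ (this is \eqref{bigthanell}). You have the direction of domination backwards, though your stated scalings are in the right places. Once these attributions are corrected the argument goes through exactly as you outline.
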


The proof is is postponed to Section~\ref{proofthmELPPd}. Observe that we need here to keep track of the dependence on $n,h$: to that end, estimates obtained in Section~\ref{sec:ELPP} will be crucial.
Note already that if $\frac{n}{h^2}\gb_{n,h} m(nh) \to \gb \in (0,\infty)$, as $n,h\to\infty$, it gives that $T_{n,h}^{\gb_{n,h}, (\ell)}$ is of order $\gb^4 h^2/n$.

\smallskip

In the next result we prove the convergence in distribution for  \eqref{def:discreteELPP}, which generalizes the convergence of  related variational problems considered in~\cite{AL11,HM07}.

\begin{theorem}\label{prop:ConvVP}
Suppose that $\frac{n}{h^2}\gb_{n,h} m(nh) \to \nu \in[0,\infty)$ as $n,h\to\infty$. For every $\ga\in (1/2,2)$ and for any $q>0$ we have the following convergence in distribution
\begin{equation}
\label{def:TA}
\frac{n}{h^2}\,  T_{n,qh}^{\beta_{n,h}} \xrightarrow[n\to\infty]{(\dd)} \cT_{\nu,q} :=\sup_{s\in \sM_q}\big\{\nu \pi(s)-\ent(s) \big\} \,,
\end{equation}
with $\sM_q:=\{s\in \sD , \ent(s)<\infty, \max_{t\in [0,1]} |s(t)|\le q\}.$
We also have
\begin{equation}
\label{conv:largeweights}
\frac{n}{h^2}\,  T_{n,qh}^{\beta_{n,h}, (\ell)} \xrightarrow[n\to\infty]{(\dd)} \cT_{\nu,q}^{(\ell)} :=\sup_{s\in \sM_q}\big\{\nu \pi^{(\ell)}(s)-\ent(s) \big\} \,,
\end{equation}
where $\pi^{(\ell)}:= \sum_{r=1}^{\ell} M_r \ind_{\{Y_r\in s\}}$ with $\{(M_r,Y_r)\}_{r\ge 1}$ the ordered statistics of $\cP$ restricted to $[0,1]\times [-q,q]$, see Section \ref{contLimitHT}.
Finally, we have
\begin{equation}\label{Tconverge}
\cT_{\nu,q}^{(\ell)} \xrightarrow[\ell\to\infty]{a.s.} \cT_{\nu,q}, \quad 
\qquad \text{and}\qquad \cT_{\nu,q} \xrightarrow[q\to\infty]{a.s.} \cT_{\nu}.
\end{equation}
\end{theorem}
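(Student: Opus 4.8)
\textbf{Proof plan for Theorem~\ref{prop:ConvVP}.}

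The plan is to prove the three convergences in turn, the first one being the core of the argument. For \eqref{def:TA}, I would first observe that by Proposition~\ref{thm:discreteELPP} (applied with $h$ replaced by $qh$) the variable $\frac{n}{h^2} T_{n,qh}^{\gb_{n,h}}$ is tight: indeed \eqref{smallthanell} and \eqref{bigthanell} give, for a fixed truncation level $\ell$, a uniform-in-$n$ tail bound of polynomial order, and since $\frac{n}{h^2}\gb_{n,h}m(nh)\to\nu$ the prefactor $(\gb_{n,h}m(nh))^{4/3}(n/h^2)^{1/3}$ is comparable to $\nu^{4/3}(h^2/n)$, so $\frac n{h^2}$ times it is bounded. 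Hence it suffices to identify the limit along subsequences. The natural strategy, following \cite{AL11,HM07}, is to pass through the truncated problems: fix $\ell$, show $\frac{n}{h^2}T_{n,qh}^{\gb_{n,h},(\ell)} \Rightarrow \cT_{\nu,q}^{(\ell)}$, and then control the discarded part $T_{n,qh}^{\gb_{n,h},(>\ell)}$ uniformly in $n$ as $\ell\to\infty$ via \eqref{bigthanell}. For the truncated convergence I would use that, after rescaling time by $n$ and space by $h$, the positions $Y_r^{(n,qh)}$ of the $\ell$ largest weights converge jointly (with their renormalized weights $M_r^{(n,qh)}/m(nh)$) to the first $\ell$ atoms $(w_r,t_r,x_r)$ of the Poisson process $\cP$ restricted to $[0,1]\times[-q,q]$ — this is a standard extreme-value/point-process convergence statement given \eqref{eq:disorder} and the regular variation of $m$, which I would state and prove (or cite from \cite{cf:BT_HT}) in Section~\ref{contLimitHT}. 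Given these finitely many converging atoms, $\frac n{h^2}T_{n,qh}^{\gb_{n,h},(\ell)}$ is a continuous functional of them: the entropy term $\frac n{h^2}\ent(\Delta)$ for a set $\Delta$ through a fixed number $j\le\ell$ of the atoms converges to $\ent$ of the piecewise-linear interpolation through the corresponding limiting points, and the maximization over which subset of the $\ell$ atoms to collect is a finite maximum, hence continuous. One must check that restricting $\Delta$ to lie in $\Lambda_{n,qh}$ corresponds in the limit exactly to the constraint $\max_t|s(t)|\le q$ defining $\sM_q$ — this is where the continuum path is required to stay in $[-q,q]$, and it matches because the interpolation of points in $[-qh,qh]$ rescales to a path in $[-q,q]$, and conversely any optimal continuum path can be approximated by interpolations of lattice points.

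The main obstacle, as usual in these energy-entropy limits, is the interchange of limits: showing that the error made by truncating to the $\ell$ largest weights is negligible uniformly in $n$. Concretely I need that for every $\gep>0$, $\limsup_{\ell\to\infty}\limsup_{n\to\infty}\bbP(\frac n{h^2}T_{n,qh}^{\gb_{n,h},(>\ell)}>\gep)=0$, together with the matching continuum statement $\cT_{\nu,q}^{(\ell)}\to\cT_{\nu,q}$ a.s. The discrete bound is exactly what \eqref{bigthanell} provides: with $h\to qh$, the right-hand side is $cb^{-\ga\ell/4}+e^{-cb^{1/4}}$ and the scale is $(\gb_{n,h}m(nqh/\ell))^{4/3}(\ell^2 n/h^2)^{1/3}$, which after multiplying by $n/h^2$ and using regular variation of $m$ with index $1/\ga$ is of order $\ell^{2/3}\cdot\ell^{-4/(3\ga)}=\ell^{2/3-4/(3\ga)}\to 0$ precisely when $\ga>2$... so more carefully: one takes $b=b(\ell)\to\infty$ slowly (e.g. $b=\ell^{\eta}$ for small $\eta$) so that the threshold $b(\ell)\times\ell^{2/3-4/(3\ga)}\to 0$ — this needs $2/3-4/(3\ga)<0$, i.e. $\ga<2$, which holds — while simultaneously $b(\ell)^{-\ga\ell/4}\to 0$. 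Here the condition $\ga\in(1/2,2)$ and the $\alpha>1/2$ restriction should enter; I would verify that $2/3<4/(3\ga)$ is equivalent to $\ga<2$ and that the full exponent bookkeeping closes. The sub-exponential term $e^{-cb^{1/4}}$ causes no trouble.

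Having established \eqref{def:TA}, the convergence \eqref{conv:largeweights} is obtained by the same argument but stopped at the truncated level — in fact it is a byproduct of the first two steps above, since I prove $\frac n{h^2}T_{n,qh}^{\gb_{n,h},(\ell)}\Rightarrow\cT_{\nu,q}^{(\ell)}$ en route. Finally, for \eqref{Tconverge}: the statement $\cT_{\nu,q}^{(\ell)}\to\cT_{\nu,q}$ a.s. as $\ell\to\infty$ follows from monotonicity (the sequence $\ell\mapsto\cT_{\nu,q}^{(\ell)}$ is nondecreasing and bounded above by $\cT_{\nu,q}$, since $\pi^{(\ell)}\le\pi$) together with the continuum analogue of the truncation estimate: the tail contribution $\sup_{s\in\sM_q}\{\nu(\pi(s)-\pi^{(\ell)}(s))-\ent(s)\}$ (which dominates $\cT_{\nu,q}-\cT_{\nu,q}^{(\ell)}$) goes to $0$ a.s., and this is exactly the continuum E-LPP estimate from Theorem~\ref{thm:generalLPP}/\eqref{eq:infentropy}: collecting the atoms beyond rank $\ell$ costs entropy at least of order $k^4/\ell^2$ for $k$ such atoms, while their weights are $O(\ell^{-1/\ga})$, so the net gain is negative for $\ell$ large; one optimizes over $k$ and uses Borel–Cantelli. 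For $\cT_{\nu,q}\to\cT_{\nu}$ as $q\to\infty$: again $q\mapsto\cT_{\nu,q}$ is nondecreasing (larger constraint set $\sM_q$) and bounded above by $\cT_{\nu}=\sup_{s\in\sD}\{\nu\pi(s)-\ent(s)\}$, which is finite a.s. by Theorem~\ref{thm:TbhatTb}; since the optimizer $s^*_\nu$ of $\cT_\nu$ has finite entropy it satisfies $\max_t|s^*_\nu(t)|<\infty$, hence $s^*_\nu\in\sM_q$ for all $q$ large enough, giving $\cT_{\nu,q}\ge\nu\pi(s^*_\nu)-\ent(s^*_\nu)=\cT_\nu$ eventually, and therefore $\cT_{\nu,q}\uparrow\cT_\nu$. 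The case $\nu=0$ is trivial since then all problems equal $0$.
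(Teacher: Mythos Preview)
Your proposal is correct and follows essentially the same route as the paper: prove \eqref{conv:largeweights} first via the finite-atom convergence \eqref{convYn}--\eqref{convWeight}, control the remainder uniformly in $n$ via \eqref{bigthanell} (your exponent bookkeeping is right once you self-correct to $\ga<2$), and deduce \eqref{def:TA}; then \eqref{Tconverge} by monotonicity. One remark: for the first half of \eqref{Tconverge} the paper uses only monotonicity (since $\pi^{(\ell)}(s)\uparrow\pi(s)$ pointwise, $\lim_\ell\cT_{\nu,q}^{(\ell)}\ge\nu\pi(s)-\ent(s)$ for every $s$), so your proposed E-LPP/Borel--Cantelli argument there is more than what is needed.
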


\section{Proof of Theorem~\ref{thm:generalLPP} and Corollary~\ref{cor:LPP}}
\label{app:LPP}

\subsection{Proof of Theorem~\ref{thm:generalLPP}}

We start with the proof in the continuous setting. The discrete setting follows the same lines and details will be skipped.

\subsubsection*{Continuous setting}
Let us consider $\cE_{k}^{(t,B)}$ the set of $k$-uples in $[0,t]\times \bbR$ (\textit{i.e.}\ up to time $t$) that have entropy smaller than $B$:
\[
\cE_{k}^{(t,B)} = \Bigg\{ \big( (t_\ell, x_{\ell}) \big)_{1\leq \ell\leq k} \subset [0,t]\times \bbR\, ; 
   \begin{aligned} &\ 0<  t_1<\cdots <t_k < t \ ;\\ &\ \ent\big(  (t_\ell, x_{\ell}  )_{1\leq \ell\leq k}\big) \le B
   \end{aligned}
\Bigg\}\, .
\]

We can compute exactly the volume of $\cE_k^{(t,B)}$.
\begin{lemma}
\label{lem:volume}
We have, for any $t>0$ and $B> 0$
\[\mathrm{Vol} \big( \cE_k^{(t,B)} \big) =   C_k  \times B^{k/2} t^{3 k /2} ,\quad \text{with } C_k =  
\frac{ \pi^k / \sqrt{2} }{ \Gamma \big( k/2 +1 \big) \Gamma \big( 3k/2+1 \big)} \, .\]
In particular, it gives that there exists some constant $C$ such that 
\[\mathrm{Vol}\big( \cE_k^{(t,B)} \big)  \leq \Big(  \frac{C B^{1/2} t^{3/2}  }{k^2}\Big)^k \, .\]
\end{lemma}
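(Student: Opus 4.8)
The quantity to compute is
\[
\mathrm{Vol}\big(\cE_k^{(t,B)}\big) = \int \ind_{\{0<t_1<\cdots<t_k<t\}} \ind_{\{\frac12 \sum_{i=1}^k (x_i-x_{i-1})^2/(t_i-t_{i-1}) \le B\}} \, \dd t_1 \cdots \dd t_k \, \dd x_1 \cdots \dd x_k ,
\]
with $(t_0,x_0)=(0,0)$. The natural plan is to integrate out the $x$-variables first, for fixed $t$'s, and then the $t$'s. For fixed ordered times, change variables to the increments $u_i := x_i - x_{i-1}$ (Jacobian $1$, and this is a bijection $\bbR^k \to \bbR^k$). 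The $x$-integral becomes the volume of the ellipsoid $\{ \sum_{i=1}^k u_i^2/(t_i-t_{i-1}) \le 2B \}$, which is $\frac{\pi^{k/2}}{\Gamma(k/2+1)} (2B)^{k/2} \prod_{i=1}^k (t_i-t_{i-1})^{1/2}$. So
\[
\mathrm{Vol}\big(\cE_k^{(t,B)}\big) = \frac{(2B)^{k/2}\pi^{k/2}}{\Gamma(k/2+1)} \int_{0<t_1<\cdots<t_k<t} \prod_{i=1}^k (t_i - t_{i-1})^{1/2} \, \dd t_1 \cdots \dd t_k .
\]

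The remaining step is to evaluate the time-simplex integral $J_k(t) := \int_{0<t_1<\cdots<t_k<t} \prod_{i=1}^k (t_i-t_{i-1})^{1/2} \, \dd \vec t$. I would change to increment variables $s_i := t_i - t_{i-1}$ (again Jacobian $1$), turning the domain into $\{s_i > 0, \sum_{i=1}^k s_i < t\}$ and the integrand into $\prod s_i^{1/2}$; then introduce the slack variable $s_{k+1} := t - \sum_{i=1}^k s_i > 0$. This is a Dirichlet integral: $\int_{\sum_{i=1}^{k+1} s_i = t, s_i>0} \prod_{i=1}^k s_i^{1/2} \, \dd s_1 \cdots \dd s_k = t^{3k/2} \cdot \frac{\Gamma(3/2)^k \Gamma(1)}{\Gamma(3k/2+1)}$ by the standard Dirichlet/Beta formula (the exponents are $1/2$ on the first $k$ coordinates, $0$ on the last). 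Using $\Gamma(3/2) = \sqrt\pi/2$ gives $J_k(t) = \frac{(\sqrt\pi/2)^k}{\Gamma(3k/2+1)} t^{3k/2}$. Combining with the prefactor:
\[
\mathrm{Vol}\big(\cE_k^{(t,B)}\big) = \frac{(2B)^{k/2}\pi^{k/2}}{\Gamma(k/2+1)} \cdot \frac{(\sqrt\pi/2)^k}{\Gamma(3k/2+1)} t^{3k/2} = \frac{2^{k/2} \pi^{k/2} (\pi^{1/2})^k 2^{-k}}{\Gamma(k/2+1)\Gamma(3k/2+1)} B^{k/2} t^{3k/2}.
\]
The powers of $2$ are $2^{k/2-k} = 2^{-k/2}$, and the powers of $\pi$ are $\pi^{k/2+k/2} = \pi^k$, so this is $\frac{\pi^k 2^{-k/2}}{\Gamma(k/2+1)\Gamma(3k/2+1)} B^{k/2} t^{3k/2} = \frac{\pi^k/\sqrt 2^{\,k}}{\Gamma(k/2+1)\Gamma(3k/2+1)} B^{k/2} t^{3k/2}$, which matches the claimed $C_k$ up to checking the precise normalization $\pi^k/\sqrt2$ versus $\pi^k/\sqrt2^{\,k}$ — I would recheck the ellipsoid-volume constant and the Dirichlet exponents carefully to pin down that one discrepancy; the structure is certainly right.

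For the second (inequality) statement, I would bound $\Gamma(k/2+1)\Gamma(3k/2+1)$ from below using Stirling: $\Gamma(k/2+1) \ge c (k/2)^{k/2} e^{-k/2}$ and $\Gamma(3k/2+1) \ge c(3k/2)^{3k/2} e^{-3k/2}$, so the product is at least $c^2 (\text{const})^k k^{2k} e^{-2k}$, whence $\mathrm{Vol}\big(\cE_k^{(t,B)}\big) \le (C B^{1/2} t^{3/2} / k^2)^k$ after absorbing the $\pi^k$, the $e^{2k}$, and the numerical constants into $C^k$ (the stray $c^{-2}$ is harmless since it can also be absorbed, noting $c^{-2} \le C'^k$ for large $C'$). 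The main obstacle is purely bookkeeping: getting every power of $2$, $\pi$, and every $\Gamma$-argument exactly right so the closed form matches $C_k$ on the nose; the Dirichlet integral and ellipsoid volume are both textbook, so there is no conceptual difficulty.
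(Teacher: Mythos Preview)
Your argument is correct, and it takes a genuinely different route from the paper. The paper proceeds by induction on $k$: it conditions on the leftmost point $(u,y)$ of a configuration in $\cE_k^{(t,B)}$, reducing to $\cE_{k-1}^{(t-u,\,B-y^2/2u)}$, and each step then produces two Beta integrals. You instead integrate out all the $x$-coordinates at once via the ellipsoid volume formula and then handle the time simplex in a single Dirichlet integral. Your approach is shorter and more transparent, and it pays an extra dividend: the discrepancy you flagged is real and the error is in the paper, not in your computation. A direct check at $k=2$ (ellipse area $2\pi B\sqrt{t_1(t_2-t_1)}$, then Dirichlet) gives $C_2=\pi^2/12$, which matches your $\pi^k/2^{k/2}$ and not the paper's $\pi^k/\sqrt 2$; indeed, running the paper's own recursion while keeping the $1/\sqrt 2$ from their $y$-integral (which gets dropped in their displayed formula for $C_k$) yields exactly $C_k=(\pi/\sqrt 2)^k\big/\big(\Gamma(k/2+1)\Gamma(3k/2+1)\big)$. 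This typo is inconsequential for the inequality and for every application downstream. For the Stirling bound your reasoning coincides with the paper's and is fine; the stray $c^{-2}$ you worry about is absorbed simply by taking $C$ large enough, since $k\ge 1$.
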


\begin{proof}
The key to the computation is the induction formula below, based on the decomposition over the left-most point in $\cE_k^{(t,B)}$ at position $(u,y)$ (by symmetry we can assume $y\geq 0$): it leaves $k-1$ points with remaining time $t-u$ and entropy smaller than $B-  \tfrac{y^2}{ 2 u}$,
\begin{equation}
\label{eq:induction}
\mathrm{Vol}\big( \cE_k^{(t,B)} \big) = 2 \int_{u=0}^t \int_{y=0}^{\sqrt{2B u}} \mathrm{Vol} \big( \cE_{k-1}^{(t-u , B - y^2/ 2u )} \big) dy du.
\end{equation}
The induction is only calculations.
For $k=1$ we have
\[\mathrm{Vol}\big( \cE_1^{(t,B)} \big)  = 2 \int_{u=0}^t \int_{y=0}^{\sqrt{2B u}}  du dy = 2\sqrt{2B}  \int_0^t u^{1/2} du = \frac{4\sqrt{2}}{3} B^{1/2} t^{3/2} \, ,\]
so that we indeed have that 
$C_1 = \pi  ( \sqrt{2}\Gamma(3/2)\Gamma(5/2) )^{-1}$.

For $k\geq 2$, by induction, we have
\begin{align*}
\mathrm{Vol}(\cE_k^{(t,B)}) = 2 C_{k-1} \int_{u=0}^t \int_{y=0}^{\sqrt{2B u} }   (t-u)^{3(k-1)/2} \big( B-\tfrac{y^2}{2u} \big)^{(k-1)/2}  dy du.
\end{align*}
Then, by a change of variable $w= y^2 /(2B u)$, we get that
\begin{align*}
\int_{y=0}^{\sqrt{2B u}} \big( B-\tfrac{y^2}{2u} \big)^{(k-1)/2}  dy
&= B^{(k-1)/2} \int_0^1 (1-w)^{(k-1)/2} \, \sqrt{\frac{Bu}{2}}  w^{-1/2}  dw  \\
& = \frac{1}{\sqrt{2}} B^{k/2} \, u^{1/2} \, \frac{\Gamma\big( (k-1)/2+1 \big) \Gamma(1/2)}{\Gamma(k/2+1)}\, .
\end{align*}
Moreover, we also have
\begin{align*}
 \int_{u=0}^t  u^{1/2} (t-u)^{3(k-1)/2} dx &= t^{3(k-1)/2+ 1/2+1} \int_0^1 v^{1/2} (1-v)^{3(k-1)/2} dv \\
 &=  t^{3 k/2}  \frac{\Gamma(3/2) \Gamma( 3(k-1)/2 +1 )}{ \Gamma( 3 k/2 +1) }\, .
\end{align*}
Hence, the constant $C_k$ verifies
\begin{align*}
C_{k}& =  2 C_{k-1} \times \sqrt{\pi  } \frac{\Gamma\big( (k-1)/2+1 \big)}{ \Gamma(k/2+1)} \times  {\frac{\sqrt \pi }{2} } \frac{\Gamma(3(k-1)/2 +1)}{ \Gamma(3k/2 +1)}\,,
\end{align*}
which completes the induction, in view of the formula for $C_{k-1}$.

For the inequality in the second part of the lemma, we simply use Stirling's formula to get that there is a constant $c>0$ such that
\[ \Gamma \big( k/2 +1 \big) \geq \big( c k \big)^{k/2} \quad \text{ and } \quad \Gamma \big( 3k/2 +1  \big) \ge \big( c k \big)^{3k/2} \, .\]
\end{proof}

Let us denote $\mathcal N_k$ the number of sets $\Delta \subset \gU_m (t,x)$ with $|\Delta|=k$, that have entropy at most~$B$. We write
\[\bbP\big( \mathcal {L}_m^{(B)}(t,x)\geq k \big) = \bbP(\mathcal N_k \geq 1) \leq \bbE[\mathcal N_k] \, .\]
Since all the points are exchangeable, we get 
\[\bbE[\mathcal N_k] = \binom{m}{k} \bbP\Big( \exists \ \sigma\in \mathfrak{S}_k\ s.t. \  (Z_{\sigma(1)},\ldots,  Z_{\sigma(k)}) \in \cE_{k}^{(t,B)} \Big),\]
where $ Z_1 = (t_1, x_1), \ldots , Z_k = (t_k,x_k)$ are independent uniform r.v. on the domain $\boldsymbol \Lambda_{t,x}$ (with volume $2tx$).
We then have that
\[\bbP \Big(  \exists \ \sigma\in \mathfrak{S}_k\ s.t. \  (Z_{\sigma(1)},\ldots,  Z_{\sigma(k)}) \in \cE_{k}^{(t,B)}\Big) = k! \, \frac{\mathrm{Vol}(\cE_{k}^{(t,B)})}{ (2tx)^k} \, . \]
We therefore obtain, using that $\binom{m}{k} \leq m^k /k!$,  together with Lemma \ref{lem:volume}
\begin{equation}
\bbP\big( \mathcal {L}_m^{(B)}(t,x)\geq k \big)  
\leq \Big(  \frac{C B^{1/2} t^{1/2} m  }{2 x k^2}\Big)^k 
\end{equation}
This gives the upper bound of Theorem \ref{thm:generalLPP}-(i). \qed

\subsubsection*{Discrete setting: upper bound}

The proof follows the same idea as above: we skip most of the details.
Define $E_{k}^{(n,B)}$ the set of $k$-uples in $\llbracket 1,n \rrbracket\times \bbZ$  that have entropy smaller than $B$:
\[
E_{k}^{(n,B)} := \Bigg\{ \big( (t_\ell, x_{\ell}) \big)_{1\leq \ell\leq k} \subset \llbracket 1,n \rrbracket \times \bbZ \ ;
\begin{aligned} & \ 0<  t_1<\cdots <t_k \le n \, ;
\\& \ \ent\big(  (t_\ell, x_{\ell}  )_{1\leq \ell\leq k}\big) 
  \le B
  \end{aligned} \Bigg\} \, .\]

We can estimate the cardinality of $E_{k}^{(n,B)}$---however not in an exact manner as in the continuous case.
\begin{lemma}
\label{lem:voldiscrete}
For any $n\in \mathbb N$ it holds true that
\[\mathrm{Vol} \big( E_{k}^{(n,B)} \big) \le   
2^k \, C_k  \times B^{k/2} n^{3 k /2} ,\quad \text{with } C_k =  
\frac{ \pi^k / \sqrt{2} }{ \Gamma \big( k/2 +1 \big) \Gamma \big( 3k/2+1 \big)} \, .\]
\end{lemma}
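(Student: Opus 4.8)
The plan is to bound the cardinality of the discrete set $E_k^{(n,B)}$ by comparing it to the volume of a slightly inflated version of the continuous set $\cE_k^{(t,B)}$, for which Lemma~\ref{lem:volume} gives an exact formula. The key observation is that every $k$-uple of integer points $\big((t_\ell,x_\ell)\big)_{1\le \ell\le k}$ counted in $E_k^{(n,B)}$ can be associated to the unit cube $\prod_{\ell=1}^k \big([t_\ell, t_\ell+1) \times [x_\ell, x_\ell+1)\big)$ (or more symmetrically a cube centered at the point), and these cubes are pairwise disjoint for distinct $k$-uples. So $\mathrm{Vol}(E_k^{(n,B)})$ equals the Lebesgue volume of the union of these unit cubes, and it suffices to show this union is contained in $\cE_k^{(t',B')}$ for suitable $t'$, $B'$ controlled by $n$, $B$ up to constants.

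First I would fix the correspondence: given integer points $0 < t_1 < \cdots < t_k \le n$ with $x_\ell \in \bbZ$, and given $(u_\ell, y_\ell) \in [t_\ell, t_\ell+1) \times [x_\ell, x_\ell + 1)$ for each $\ell$, I need to check that $\big((u_\ell, y_\ell)\big)_{1\le \ell\le k}$ lies in $\cE_k^{(t',B')}$ for an appropriate choice. The time ordering $0 < u_1 < \cdots < u_k$ need not hold exactly — $u_\ell$ could exceed $u_{\ell+1}$ since they sit in adjacent half-open unit intervals — so I would instead take the cube centered at each point, $[t_\ell - \tfrac12, t_\ell + \tfrac12) \times [x_\ell - \tfrac12, x_\ell+\tfrac12)$, which does preserve strict ordering of times (consecutive integer times differ by at least $1$). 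With that choice $0 < u_1 < \cdots < u_k < n + \tfrac12 \le \tfrac32 n$, so $t' = \tfrac32 n$ works. For the entropy, I need $\ent\big((u_\ell,y_\ell)\big) \le B'$; since each $|u_\ell - t_\ell| \le \tfrac12$ and $|y_\ell - x_\ell| \le \tfrac12$, and $u_\ell - u_{\ell-1} \ge t_\ell - t_{\ell-1} - 1 \ge \tfrac12 (t_\ell - t_{\ell-1})$ (using $t_\ell - t_{\ell-1} \ge 1$), each summand $\tfrac{(y_\ell - y_{\ell-1})^2}{2(u_\ell - u_{\ell-1})}$ is bounded by a constant times $\tfrac{(x_\ell - x_{\ell-1})^2 + 1}{t_\ell - t_{\ell-1}}$; summing and using $\ent(\Delta) \le B$ plus the fact that $\sum_\ell \tfrac{1}{t_\ell - t_{\ell-1}} \le k \le $ (something controlled) gives $B' = C(B + k) \le C'(B \vee k)$ — I'd want to be a little careful here, possibly absorbing the additive $k$ by noting $k \le n$ and $B' \le C(B + n)$ suffices since we then invoke the volume bound $(C B'^{1/2} t'^{3/2}/k^2)^k$; but since the target is $2^k C_k B^{k/2} n^{3k/2}$, an additive loss of $n$ in the entropy is too crude.

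The cleaner route, and the one I'd actually pursue, is to make the cube half-side small, say $\tfrac14$ in the time coordinate and keep the ordering margin, or better: observe that the statement only claims the bound $2^k C_k B^{k/2} n^{3k/2}$, which is exactly $2^k \mathrm{Vol}(\cE_k^{(n,B)})$ by Lemma~\ref{lem:volume}. So I want to embed $E_k^{(n,B)}$, after attaching disjoint unit cubes, into a region of volume at most $2^k \mathrm{Vol}(\cE_k^{(n,B)})$. This suggests scaling rather than translating: to each integer $k$-uple with entropy $\le B$, associate the cube of side $1$, and note that shrinking the whole configuration by choosing representative points and rescaling time by a factor close to $1$ will not inflate the entropy if done carefully — concretely, I expect one shows $E_k^{(n,B)} + [0,1)^{2k}$-cubes fit inside $\cE_k^{(2n, 2B)}$ up to the ordering subtlety, and then $\mathrm{Vol}(\cE_k^{(2n,2B)}) = C_k (2B)^{k/2}(2n)^{3k/2} = 2^{2k} C_k B^{k/2} n^{3k/2}$, which is off by a factor $2^k$ from the claim; tightening the constants (using cube side matched to the actual gaps, and the fact that only the time gaps, not absolute times, enter the entropy) recovers the stated $2^k$.

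The main obstacle is precisely this bookkeeping: making the cube-to-continuum embedding respect both the strict time-ordering $t_1 < \cdots < t_k$ and the entropy constraint $\ent \le B$ simultaneously, without losing more than a factor $2^k$. I expect the authors handle the ordering by associating to each discrete point a cube that is strictly to its left/below, or by a monotone reindexing, and handle the entropy by the elementary inequality $(a+\varepsilon)^2 \le 2a^2 + 2\varepsilon^2$ together with the observation that the $+2\varepsilon^2$ terms sum to something comparable to the entropy already present (since consecutive times differ by at least $1$, so $\sum_\ell (t_\ell - t_{\ell-1})^{-1} \le k$, and on the event the $k$-uple is counted we may as well restrict to $k \le $ the relevant threshold). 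I would write the embedding explicitly, verify the two constraints, conclude $\mathrm{Vol}(E_k^{(n,B)}) \le \mathrm{Vol}\big(\text{inflated } \cE\big)$, and apply Lemma~\ref{lem:volume}; the rest is routine. Then, exactly as in the continuous case, one obtains the discrete upper bound of Theorem~\ref{thm:generalLPP} by the first-moment computation $\bbP(L_m^{(B)}(n,h) \ge k) \le \binom{m}{k} k! \, \mathrm{Vol}(E_k^{(n,B)}) / (|\Lambda_{n,h}|)^k$ and $|\Lambda_{n,h}| = (2h+1)(n+1) \ge hn$, together with $\binom{m}{k} \le m^k/k!$ and the Stirling bound on $C_k$.
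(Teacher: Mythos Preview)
Your approach is genuinely different from the paper's, and as sketched it contains a concrete gap. The paper does not embed discrete configurations into the continuous set via unit cubes; instead it proceeds by induction on $k$, writing the discrete analogue of the recursion~\eqref{eq:induction},
\[
\mathrm{Vol}\big( E_k^{(n,B)} \big) = 2 \sum_{i=1}^n \sum_{y=0}^{\lfloor\sqrt{2B i}\rfloor} \mathrm{Vol} \big(E_{k-1}^{(n-i,\,B-y^2/2i)} \big),
\]
and then bounds each sum by the corresponding integral using the monotonicity comparisons~\eqref{eq:sumRiemann}. The extra factor picked up at each of the $k$ inductive steps yields the $2^k$ in the statement, and the integrals are exactly those already evaluated in Lemma~\ref{lem:volume}.

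Your cube-embedding route runs into a real problem at the entropy step. You claim $u_\ell - u_{\ell-1} \ge t_\ell - t_{\ell-1} - 1 \ge \tfrac12 (t_\ell - t_{\ell-1})$, but the second inequality is equivalent to $t_\ell - t_{\ell-1} \ge 2$ and fails precisely when consecutive integer times differ by~$1$, which is permitted and indeed generic in $E_k^{(n,B)}$. In that case the perturbed gap $u_\ell - u_{\ell-1}$ can be arbitrarily close to~$0$, so the perturbed entropy increment $(y_\ell - y_{\ell-1})^2 / \big(2(u_\ell - u_{\ell-1})\big)$ is unbounded and no finite $B'$ can contain the image. One can try to repair this by shrinking the cube side in the time coordinate to some $\varepsilon<1$, but then each cube has volume~$\varepsilon^k$ rather than~$1$, and after compensating one recovers at best a bound $C^k C_k B^{k/2} n^{3k/2}$ with a constant $C$ that is not obviously~$2$; your own computation already lands on $2^{2k}$ rather than $2^k$ before this issue even arises. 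The paper's inductive sum--integral comparison sidesteps all of this and delivers the factor $2^k$ directly.
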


\begin{proof}
The analogous of \eqref{eq:induction} is here
\begin{equation}
\mathrm{Vol}\big( E_k^{(n,B)} \big) = 2 \sum_{i=1}^n \sum_{y=0}^{\sqrt{2B i}} \mathrm{Vol} \big(E_{k-1}^{(n-i,B-x^2/2i)} \big).
\end{equation}
The induction is again straightforward calculations: we can use the computations made in the continuous setting, together with the comparison between finite sums and Riemann integrals, \textit{i.e.} 
\begin{equation}\label{eq:sumRiemann}
\begin{split}
&\sum_{i=0}^n g(i)\le \int_0^{n+1} g(z)\dd z\qquad \text{if $g$ is increasing},\\
&\sum_{i=0}^n g(i)\le g(0)+\int_0^{n} g(z)\dd z\qquad \text{if $g$ is decreasing} \, .
\end{split}
\end{equation}
Details are left to the reader.
\end{proof}

Again, we have
$\bbP \big( L_m^{(B)}(n,h)\geq k \big)  \leq \bbE[N_{k}]$,
where $N_k$ is the number of sets $\Delta \subset \gU_{m} \subset \Lambda_{n,h}$ with $|\Delta|=k$, that have entropy at most $B$.
Then,
\[\bbE[N_{k}] = \binom{m}{k} 
\bbP\Big( \exists \ \sigma\in \mathfrak{S}_k\ s.t. \  (Z_{\sigma(1)}^{(n,h)},\ldots,  Z_{\sigma(k)}^{(n,h)}) \in \cE_{k}^{(n,B)} \Big)
,\]
where $ (Z_1^{(n,h)},\cdots, Z_k^{(n,h)} )$ are a uniform random choice of $k$ distinct points from $\Lambda_{n,h}$ (which contains $n(2h+1)$ points)---the main difference with the continuous setting comes from the fact that the $Z_i$'s are not independent.
We therefore have that, using Lemma~\ref{lem:voldiscrete},
\[\bbE[n_{k}] = \binom{m}{k}  \ 
\frac{ \mathrm{Vol}(E_{k}^{(n,B)})}{\binom{2nh+n}{k}}  \le \frac{m^k}{(2nh)^k} \Big(\frac{C B^{1/2} }{ k^2} \Big)^k \, . \]
We also used that $\binom{m}{k} \le m^k /k!$ and that $\binom{2nh+n}{k} \ge  (2nh +n -k)^k/k!$ with $k\le n$.
This concludes the proof of the upper bound in Theorem~\ref{thm:generalLPP}-(ii).\qed

\subsection{Proof of Corollary~\ref{cor:LPP}}
We prove it in the continuous setting, the discrete one being similar.
From Theorem \ref{thm:generalLPP}, we deduce that for any $u\ge  (e C_0)^{1/2}$, we have
\begin{equation}
\label{tailL}
\bbP\Big(  \mathcal L_m^{(B)}(t,x)   \ge u  \, (Bt/x^2)^{1/4}  \sqrt{m} \Big) \le \exp \Big( - u (Bt/x^2)^{1/4}  \sqrt{m}  \Big) . 
\end{equation}
Applying this inequality with $u = (e C_0)^{1/2}$, and using also the \emph{a priori} bound $\mathcal L_m^{(B)}(n,h)\le m$, we get that for any $b>0$
\begin{align*}
\bbE\bigg[ \bigg( &\frac{\mathcal L_m^{(B)}(t,x)}{\big( (Bt/x^2)^{1/4} m^{1/2}\big) \wedge m} \bigg)^b \bigg] \\
& \le (e C_0)^{b/2}+ \int_{ ( e C_0)^{b/2} }^{+\infty} \bbP\bigg(   \frac{\mathcal L_m^{(B)}(t,x)}{\big( (Bt/x^2)^{1/4} m^{1/2}\big) \wedge m}  > u^{1/b}\Big) d u\\
&\le   (e C_0)^{b/2} + cst.\, 
\end{align*}

%

\section{Proof of Theorem \ref{thm:TbhatTb}}
\label{ProofThmTbhatTb}


Let us recall that $\cP:=\big\{(w_i,t_i,x_i)\colon i\geq 1 \big\}$ is a Poisson Point Process  on $[0,\infty) \times[0,1]\times \mathbb R$
of intensity $\mu(\dd w  \dd t \dd x)=\frac{\alpha}{2} w^{-\alpha-1}\ind_{\{w>0\}} \dd w  \dd t \dd x$, as introduced in Section \ref{app1}. 

\subsection{Ideas of the proof} First we prove that $\cT_{\gb}=+\infty$ when $\ga\le 1/2$. Then,  we prove the scaling relation \eqref{scalingrelation}, and finally we show the finiteness of the $\upsilon$-th moment ($\upsilon <\ga-1/2$). We stress that the core of the proof is based on an application of the continuous E-LPP: roughly, the idea of the proof is to decompose the variational problem \eqref{def:T} according to the value of the entropy:
\begin{equation}
\label{varB}
\cT_{\gb} = \sup_{B\ge 0} \Big\{ \gb \sup_{s\in \sD, \ent(s) =B} \pi(s)    -B \Big\} \, .
\end{equation}
Then, a simple scaling argument gives that
\[\sup_{s:\ent(s) \le B} \pi(s) \stackrel{(d)}{=} B^{\frac{1}{2\ga}}\sup_{s:\ent(s) \le 1} \pi(s).\] 
The E-LPP appears essential to show that the last supremum is finite, see in particular Lemma~\ref{lem:moment} below. Then, at a heuristic level, we get that $\cT_{\gb}$ is finite because in \eqref{varB} we have $B^{\frac{1}{2\ga}}\ll B$ as $B\to\infty$ (remember that $\ga>1/2$).
In the last part of the proof we prove the continuity of $\gb\mapsto \cT_{\gb}$ and of the existence and uniqueness of the maximizer in \eqref{def:T}. 

\subsection{Case $\ga\le 1/2$}
Let us prove here that $\cT_{\gb} =+\infty$ when $\ga\in (0,1/2]$. 
For any $k$ in $\bbZ$, we define the event
\[\cG_k := \big\{\cP\cap  [ \gb^{-1} 2^{2k+1},+\infty) \times [\tfrac12,1] \times [2^{k-1},2^{k}]   \neq \emptyset \big\} \, ,\]
On the event $\cG_k$, we denote $(w_k,t_k,x_k)$ a point of $\cP$ such that $w_k \ge \gb^{-1} 2^{2k+1}$ and $(t_k,x_k)\in [\tfrac12,1] \times [2^{k-1},2^{k}]$: considering the path going straight to $(t_k,x_k)$ we get that 
\[  \cT_{\gb} \ge \gb w_k - \frac{x_k^2}{2t_k}  \ge  2^{2k}, \qquad    \text{ on the event }\cG_k \, .\]
Then, it is just a matter of estimating $\bbP(\cG_k)$. We stress that considering $\cM_k$ the maximal weight in $[\tfrac12,1] \times [2^{k-1},2^{k}]$, we find that $\cM_k$ is of order $(2^k)^{1/\ga}$ (as a maximum of a field of independent heavy-tail random variables, or using the scaling relations below), so that we get that:
if $\ga<1/2$, $\bbP(\cG_k) \to 1$ as $k\to+\infty$; if $\ga=1/2$, there is a constant $c>0$ such that $\bbP(\cG_k)\ge c$ for all $k\in \bbZ$; if $\ga>1/2$, $\bbP(\cG_k) \to 1$ as $k\to -\infty$.
note that  the events $\cG_k$ are independent, so an application of Borel-Cantelli lemma gives that for $\ga\le 1/2$, a.s.\ $\cG_k$ occurs for infinitely many $k \in \bbN$: since  $\cT_{\gb}\ge 2^{2k}$ on $\cG_k$, it implies that $\cT_{\gb} = +\infty$ a.s.\ for $\ga\le 1/2$.

On the other hand, it also proves that when $\ga>1/2$, a.s.\ there exists some $k_0 \le -1$ such that $\cG_{k_0}$ occurs and thus $\cT_{\gb} \ge 2^{2k_0}>0$.

\subsection{Scaling relations}
For any $\alpha\in (0,2)$ and $a>0$ we consider two functions $\phi(w,t,x) := (w,t,a x)$ and $\psi(w,t,x) := (a^{-1/\ga} w,t, x)$ which scale space by $a$ (hence the entropy by $a^2$) and weights by $a^{-1/\ga}$ respectively. 
The random sets $\phi(\cP)$ and $\psi(\cP)$ are still
 two Poisson Point Processes with the same law, that is $\phi(\cP)\stackrel{(\dd)}{=} \psi(\cP)$.
 This implies that (recall the definition~\eqref{def:discrCont})
 \[
 \pi(a s) \overset{(\dd)}{=} a^{1/\alpha} \pi(s).
 \]
 Therefore, 
 \begin{equation}
 \label{eq:scalingAalpha2}
 \sup_{s\in \sD, \ent(s)<\infty} \big \{ \beta \pi(s) -a^2 \ent(s) \big\} \overset{(\dd)}{=}\sup_{s\in \sD, \ent(s)<\infty} \big \{  \beta a^{-1/\alpha} \pi(s) - \ent(s) \big \}.
 \end{equation}
  Consequently, 
  for any $\alpha\in (0,2)$, $a^2 \cT_{\gb/a^2} \stackrel{(\dd)}{=}  \cT_{\beta a^{-1/\alpha}}$. 
  In particular, for any $\beta>0$ it holds true that for $\ga>1/2$
  \begin{align}
  \cT_\gb \stackrel{(\dd)}{=}  \gb^{\frac{2\alpha}{2\alpha-1}} \cT_1\, .
  \end{align}

\subsection{Finite moments of $\cT_{\gb}$}
\label{sec:finitemoments}
We show that for $\alpha\in (1/2,2)$  $\bbE[(\cT_{\gb})^{\upsilon}]<\infty$ for any $\upsilon <\ga-1/2$, which readily implies that $\cT_\beta<\infty$ a.s.
For any interval $[c,d)$ with $0\le c< d$ we let
\begin{equation}
\label{def:Tcd}
\cT_\beta \big( [c,d) \big):= \sup\limits_{s\in \sD, \ent(s)\in [c,d)}\big\{\beta\pi(s)-\ent(s)\big\},
\end{equation}
and we observe that $\cT_\beta=\cT_\beta \big( [0,1) \big)\vee \displaystyle\sup_{k\ge 0} \cT_\beta \big( [2^k,2^{k+1} ) \big)$.
Moreover, as in \eqref{eq:scalingAalpha2} we have 
\begin{align}
\cT_\beta \big( [2^k,2^{k+1}) \big) & \overset{(\dd)}{=}\sup\limits_{s:\, \ent(s)\in [1,2)} \big\{\, 2^{\frac{k}{2\alpha}}\beta\pi(s)- 2^{k}\, \ent(s)\big\} \notag \\
& \le  2^{\frac{k}{2\ga}} \gb \sup_{s:\ent(s) \le 2} \pi(s)  - 2^k \, .
\label{Tk}
\end{align}
We show the following Lemma.
\begin{lemma}
\label{lem:moment}
For any $a <\ga$, we have that there is a constant $c_a>0$ such that for any $t >1$ we get
\[\bbP\Big( \sup_{s\in \sD, \ent(s) \le 2} \pi(s) > t \Big) \le c_a t^{-a} \, .\]
\end{lemma}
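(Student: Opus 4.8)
The plan is to bound $\sup_{s:\ent(s)\le 2}\pi(s)$ by relating it to the continuous E-LPP $\mathcal L_m^{(B)}(t,x)$ of Theorem \ref{thm:generalLPP}, which controls how many Poisson points a path of bounded entropy can collect. First I would observe that a path with $\ent(s)\le 2$ and $s(0)=0$ cannot stray too far: by Cauchy--Schwarz, $|s(t)|\le \sqrt{2t\,\ent(s)}\le 2$ for all $t\in[0,1]$, so the whole problem lives inside the fixed box $\boldsymbol\Lambda_{1,2}=[0,1]\times[-2,2]$. Next, I would split the weights of $\cP$ in this box according to dyadic scales: for $j\ge 0$ let $\cP_j$ be the points $(w,t,x)\in\cP$ with $t,x$ in the box and $w\in[2^j,2^{j+1})$, and let $N_j=|\cP_j|$. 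Since the intensity of $\cP$ restricted to $\boldsymbol\Lambda_{1,2}$ in the $w$-variable over $[2^j,2^{j+1})$ is $\frac{\alpha}{2}\cdot 4\cdot(2^{-j\alpha}-2^{-(j+1)\alpha})=c_\alpha 2^{-j\alpha}$, the variable $N_j$ is Poisson with mean $\lambda_j:=c_\alpha 2^{-j\alpha}$, and the $N_j$ are independent.

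Then, conditionally on the positions, the number of points of $\cP_j$ that any single entropy-$\le 2$ path can collect is at most $\mathcal L_{N_j}^{(2)}(1,2)$, and hence
\[
\sup_{s:\ent(s)\le 2}\pi(s)\ \le\ \sum_{j\ge 0} 2^{j+1}\,\mathcal L_{N_j}^{(2)}(1,2)\,.
\]
By Theorem \ref{thm:generalLPP}(i), $\mathcal L_{N_j}^{(2)}(1,2)$ has a stretched-exponential upper tail once it exceeds $c\sqrt{N_j}$, and $N_j$ itself is Poisson$(\lambda_j)$ with $\lambda_j$ decaying geometrically; so with overwhelming probability $\mathcal L_{N_j}^{(2)}(1,2)\lesssim \sqrt{N_j}\vee 1\lesssim (\lambda_j\vee 1)^{1/2}$ up to logarithmic corrections, which is summable against $2^j$ only for the finitely many $j$ with $\lambda_j\gtrsim 1$; for large $j$ one instead uses that $N_j=0$ with probability $1-O(\lambda_j)$. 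To get the precise tail $c_a t^{-a}$ for $a<\alpha$, I would argue that $\{\sup_s\pi(s)>t\}$ forces one of the following: either some weight $w$ in the box exceeds $t/2$ (probability $O(t^{-\alpha})$ by the Poisson count, which is where the exponent $\alpha$ — and the loss to any $a<\alpha$ — enters), or $\sum_{j:2^{j}\le t/2}2^{j+1}\mathcal L_{N_j}^{(2)}(1,2)>t/2$; on the latter event, a union bound over the dyadic scales together with the tail estimate \eqref{tailL}-style bound from Theorem \ref{thm:generalLPP}, combined with Poisson concentration for each $N_j$, gives a bound far smaller than $t^{-a}$ (indeed stretched-exponential in a power of $t$). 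Summing the two contributions yields $\bbP(\sup_{s:\ent(s)\le 2}\pi(s)>t)\le c_a t^{-a}$.

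The main obstacle is the bookkeeping in the dyadic decomposition: one must handle simultaneously the randomness of the weight counts $N_j$ (Poisson, geometrically decaying means) and, given $N_j$, the E-LPP tail, and check that the single-weight contribution of order $t^{-\alpha}$ genuinely dominates — i.e. that the multiscale sum of collected weights, after optimizing the split of the budget $t$ among scales, never produces a tail heavier than $t^{-a}$ for $a<\alpha$. A clean way to do this is to allocate to scale $j$ a budget proportional to $2^{-\epsilon j}t$ (so the budgets sum to $O(t)$) and bound $\bbP(2^{j+1}\mathcal L_{N_j}^{(2)}(1,2)>2^{-\epsilon j}t)$ via first conditioning on $N_j\le 2\lambda_j+K$ (a Poisson deviation bound) and then applying Theorem \ref{thm:generalLPP}(i); each term is then either controlled by the Poisson tail of $N_j$ or by the stretched-exponential E-LPP tail, both of which are $o(t^{-a})$ after summing in $j$. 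This is exactly the estimate needed, since by \eqref{Tk} it gives $\bbP(\cT_\beta([2^k,2^{k+1}))>0)\le \bbP(2^{k/2\alpha}\beta\sup_{\ent(s)\le 2}\pi(s)>2^k)\le c_a\beta^a 2^{-ka(1-\frac1{2\alpha})}$, which is summable in $k$ precisely when $a(1-\frac1{2\alpha})$ can be taken positive, i.e. for all $\alpha>1/2$, and yields the finiteness of $\bbE[(\cT_\beta)^\upsilon]$ for $\upsilon<\alpha-1/2$.
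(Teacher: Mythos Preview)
Your overall strategy is sound and rests on the same core tool (Theorem~\ref{thm:generalLPP}), but your decomposition is genuinely different from the paper's and carries one real gap.

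The paper does not slice the Poisson cloud by weight \emph{value}. It passes to the ordered statistics of $\cP$ on $[0,1]\times[-2,2]$, writing $\cP=(M_i,Y_i)_{i\ge 1}$ with $M_1\ge M_2\ge\cdots$ and $M_i=4^{1/\ga}(\cE_1+\cdots+\cE_i)^{-1/\ga}$, and groups by \emph{rank}:
\[
\pi(s)=\sum_{j\ge 0}\ \sum_{i=2^j}^{2^{j+1}-1} M_i\,\ind_{\{Y_i\in s\}}\ \le\ \sum_{j\ge 0} M_{2^j}\,\mathcal L_{2^{j+1}}^{(2)}\,,
\]
then runs a union bound, combining the E-LPP tail from Theorem~\ref{thm:generalLPP} with a moment bound on $(2^j)^{1/\ga}M_{2^j}$. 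Because the rank index exhausts all points, this single one-sided sum automatically accounts for every weight in the box, large and small.

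Your value-based dyadic split can be made to work too, but the inequality $\sup_s\pi(s)\le \sum_{j\ge 0}2^{j+1}\mathcal L_{N_j}^{(2)}(1,2)$ is false as written: it discards every point with weight $w<1$, and there are a.s.\ infinitely many such points in $\boldsymbol\Lambda_{1,2}$ (for $\ga\ge 1$ their total weight is even infinite, so this is not a negligible remainder). You must either extend the sum to all $j\in\bbZ$ and control the $j<0$ tail --- here $N_j$ is Poisson of mean $\asymp 2^{-j\ga}\to\infty$, and Corollary~\ref{cor:LPP} gives $\mathcal L_{N_j}^{(2)}\lesssim\sqrt{N_j}$, so the typical term $2^j\sqrt{N_j}\asymp 2^{j(1-\ga/2)}$ is summable over $j<0$ precisely because $\ga<2$ --- or bound $\sup_{s}\sum_{(w,t,x)\in\cP,\,w<1}w\,\ind_{\{(t,x)\in s\}}$ separately and show it has a light tail. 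Once this is patched, your single-large-weight versus bulk dichotomy does yield the $t^{-a}$ bound; the paper's rank-based route simply sidesteps this two-regime bookkeeping.
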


From this lemma and \eqref{Tk}, we get that for any $t \ge -1$ and any $k$ large enough so that $\gb^{-1} 2^{- \frac{k}{2\ga}}2^{-k}>2$, we get
\begin{align}
\bbP\Big(\cT_\beta \big( [2^k,2^{k+1}) \big) >t  \Big)
& \leq \bbP\Big( \sup_{s \in \sD , \ent(s) \le 2} \pi(s) > \gb^{-1} 2^{-\frac{k}{2\ga}} (t +2^k)\Big)\notag\\ 
&\le  c_a   \gb^{a} 2^{k  \tfrac{a}{2\ga} }   \big( t+ 2^k \big)^{-a} \, .
\label{tailTk}
\end{align}
Then, for any $t\ge 1$ and $a<\ga$, we get by a union bound that
\begin{align*}
\bbP\big( \cT_{\gb} >t\big) & \le \sum_{k=0}^{\infty} \bbP\Big(\cT_\beta \big( [2^k,2^{k+1}) \big) >t  \Big) \\
& \le c'_a 2^a \gb^{a} t^{-a}  \sum_{k = 0}^{\log_2 t}  2^{ k \tfrac{a}{2\ga}}     +  c'_a 2^a  \gb^{a}\sum_{k>\log_2 t} 2^{ -a k \big( 1-\tfrac{1}{2\ga} \big) }  \\
& \le c''_a  \gb^a t^{-a} t^{\frac{a}{2\ga}} + c''_a  t^{-a \big( 1-\tfrac{1}{2\ga} \big) } \le 2c''_a \gb^a  t^{-a \big( 1-\tfrac{1}{2\ga} \big) } \, ,
\end{align*}
where we used that $t+2^k \ge t/2$ if $k\le \log_2 t$, and $t+2^k \ge 2^k/2$ if $k> \log_2 t$. For the second sum we also used that $1-\tfrac{1}{2\ga} >0$ when $\ga>1/2$. 
In particular, this shows that for any $\gd>0$, there is some constant $c_{\gd,\gb}>0$ such that for any $t\ge 1$
\begin{equation}\label{bondTbeta}
\bbP\big( \cT_{\gb} >t\big) \le c_{\gd,\gb} t^{-(\ga-\frac12) +\gd} \, ,
\end{equation}
which proves that $\bbE[(\cT_{\gb})^{\upsilon}]<\infty$ for any $\upsilon <\ga-1/2$.

\begin{proof}[Proof of Lemma \ref{lem:moment}]

Let us recall that $\ent(s)\leq 2$ implies that $ \max |s|\leq 2$. Therefore we can restrict our Poisson Point Process to $\mathbb R_+\times [0,1]\times [-2,2]$. 
 In this case (cf. Section \ref{contLimitHT} below) we rewrite a realization of the Poisson Point Process by using its ordered statistic.
We introduce $(Y_i)_{i\in\mathbb N}$ be an i.i.d.\ sequence of uniform random variables on $[0,1]\times [-2,2]$ and $(M_i)_{i\in\mathbb N}$ be a random sequence independent of $(Y_i)_{i\in \mathbb N}$ defined by  $M_i = 4^{1/\ga} (\cE_1 + \cdots +\cE_i)^{-1/\ga} $ with $(\cE_j)_{j\ge 1}$ an i.i.d.\ sequence of ${\rm Exp}(1)$ random variables.
 In such a way $\cP\overset{(\dd )}{=}(M_i,Y_i)_{i\in\mathbb N}$ and $\pi(s)= \sum_{i=1}^\infty  M_i \ind_{\{Y_i \in s\}}$.

The proof is then a consequence of Theorem \ref{thm:generalLPP} (with $B=1$), which allows to use the same ideas as in \cite[Proposition 3.3]{HM07} -- we develop the argument used in \cite{HM07} in a more robust way, which makes it easier to adapt to the discrete setting. Using the notations introduced in Section \ref{sec:ELPP}, for any $i\ge 0$, we denote $\boldsymbol \gU_i = \{Y_1, \ldots, Y_i\}$ ($\boldsymbol \gU_0 =\emptyset$), and let $\Delta_i = \Delta_i(s) = s\cap \boldsymbol \gU_i $ be the set of the $i$ largest weights collected by $s$. The E-LPP can be written here as $\mathcal L_i^{(2)} := \max_{s: \ent(s) \le 2} |\Delta_i(s)|$ -- we drop here the dependence on $t,x$.

Using that $M_i$ is a non-increasing sequence, we write
\begin{equation}
\label{eq:split-sup}
\pi(s) = \sum_{j=0}^{\infty} \sum_{i=2^{j}}^{2^{j+1}-1} M_i \ind_{\{Y_i\in s\}} 
\le  \sum_{j=0}^{\infty} M_{2^j} \mathcal{L}_{2^{j+1}}^{(2)}\, .
\end{equation}

Then, we fix some $\gd>0$ such that $1/\ga-1/2 >2\gd$, and we let $C=\sum_{j=0}^{\infty} 2^{j(1/2-1/\ga+2\gd)}$: we obtain via a union bound  that
\begin{align}
\bbP\Big( &\sup_{\ent(s)\le 2} \pi(s) >t  \Big) \le \sum_{j=0}^{\infty} \bbP\Big( M_{2^j}  \mathcal{L}_{2^{j+1}}^{(2)} > \frac1C \, t\, 2^{j(1/2-1/\ga+2\gd)} \Big) \nonumber \\
&\le   \sum_{j=0}^{\infty}  \Big[ \bbP\Big( \mathcal{L}_{2^{j+1}}^{(2)} > C' \log t \, (2^{j+1})^{1/2+\gd)} \Big) + \bbP\Big( M_{2^j} > C'' \, \frac{t}{\log t} (2^{j})^{-1/\ga+\gd} \Big)  \Big]\, .
\label{split-sup-unionbound}
\end{align}
Here $C'$ is a constant that we choose large in a moment, and $C''$ is a constant depending on $C,C'$ - we also work with $t\ge 2$ for simplicity.

For the first probability in the sum, we obtain from Theorem \ref{thm:generalLPP}-(i) that
 provided $C' (\log t)  2^{j\gd}\ge 2 C_0^{1/2}$
\[
\bbP\Big( \mathcal{L}_{2^{j+1}}^{(2)} > C' \log t \, (2^{j+1})^{1/2+\gd)} \Big) \le \Big(\frac12\Big)^{C' (\log t)  2^{j\gd}} \le t^{- \log 2\,  C' 2^{j\gd}}\, .
\]
Hence, for $t$ sufficiently large we get that
\begin{equation}\label{L2split1}
 \sum_{j=0}^{\infty}   \bbP\Big( \mathcal{L}_{2^{j+1}}^{(2)} > C' \log t \, (2^{j+1})^{1/2+\gd)} \Big)  \le  c t^{- C'\log 2} \le c t^{-a}
\end{equation}
provided that we fixed $C'$ large.

For the second probability in the sum, recall that $M_i \stackrel{\rm (d)}{=} 4^{1/\ga} {\rm Gamma}(i)^{-1/\ga}$, so that for any $a<\ga$, $\bbE[( i^{1/\ga} M_i )^{a}]$ is bounded by a constant independent of $i$.
Therefore, Markov's inequality gives that
\[\bbP\Big( M_{2^j} > C'' \, \frac{t}{\log t} (2^{j})^{-1/\ga+\gd} \Big) \le c (\log t)^a t^{-a} (2^{j})^{-a\gd}\,  ,\]
so that 
\begin{equation}\label{M2split1}
 \sum_{j=0}^{\infty} \bbP\Big( M_{2^j} > C'' \, \frac{t}{\log t} (2^{j})^{-1/\ga+\gd} \Big) \le c (\log t)^a t^{-a} \, .
\end{equation}
Plugging \eqref{L2split1} and \eqref{M2split1} into \eqref{split-sup-unionbound}, we obtain that for any $a'<a <\ga$ there are constants $c>0$ such that for any $t\ge 2$
\[\bbP\Big( \sup_{\ent(s)\le 2} \pi(s) >t  \Big) \le 2 c (\log t)^a t^{-a} \le c' t^{-a'} \, ,\]
which concludes the proof.
\end{proof}

\subsection{Continuity of $\gb \mapsto \cT_{\gb}$}
\label{sec:continuity}

An obvious and crucial fact that we use along the way is that  for any realization of $\cP$, $\gb\mapsto \cT_{\gb}$ is non-decreasing.

\subsubsection{Left-continuity}
Let us first show that $\gb \mapsto \cT_{\gb}$ is left-continuous, since it is less technical.
Fix $\gep>0$.  For any $\gb$, there exists a path $s^{(\gep)}_{\gb}$ with $\pi(s_{\gb}^{(\gep)}) <\infty$ such that $\cT_{\gb} \le \beta \pi(s^{(\gep)}_{\gb}) -\ent(s^{(\gep)}_{\gb}) +\gep$.  Using this path $s_{\gb}^{(\gep)}$, we then  simply write that for any $\gd>0$
\[\cT_{\gb} \ge \cT _{\gb-\gd} \ge (\gb-\gd) \pi(s^{(\gep)}_{\gb}) - \ent(s^{(\gep)}_{\gb}) \, . \]
Letting $\gd\downarrow 0$, we get that the right hand side converges to $\gb \pi(s^{(\gep)}_{\gb}) - \ent(s^{(\gep)}_{\gb}) \ge \cT_{\gb} -\gep$. 
Since $\gep$ is arbitrary, one concludes that $\lim_{\gd \uparrow 0} \cT_{\gb-\gd} =\cT_{\gb}$, that is $\gb \mapsto \cT_{\gb}$ is left-continuous.

\subsubsection{Right-continuity}
It remains to prove that a.s. $\gb \mapsto \cT_{\gb}$ is right-continuous. 
We prove a preliminary result.
\begin{lemma}\label{lem:relsupK0}
For any $K>0$, $\bbP$-a.s.\ there exists $B_0>0$ such that for any $0\le \gb \le K$
\begin{equation}\label{relsupK0}
\cT_{\gb} = \cT_{\gb}\big( [0,B_0] \big)\,  ,
\end{equation} 
where $\cT_{\gb}\big( [0,B_0] \big)$  is defined in \eqref{def:Tcd}.
\end{lemma}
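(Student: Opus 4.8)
The plan is to exploit the fact that, because $\ga>1/2$, a path of large entropy $B$ can collect an energy of order at most $B^{1/(2\ga)}\ll B$, so such paths are never competitive in \eqref{def:T} — and to make this uniform over $\gb\le K$. Concretely, fix a dyadic cut-off $B_0=2^{k_0}$ (with $k_0$ to be chosen, depending on $K$). Since $[0,\infty)=[0,B_0]\cup\bigcup_{k\ge k_0}[2^k,2^{k+1})$, the definition \eqref{def:Tcd} gives the decomposition
\[
\cT_\gb=\cT_\gb\big([0,B_0]\big)\vee\sup_{k\ge k_0}\cT_\gb\big([2^k,2^{k+1})\big).
\]
Taking the null path $s\equiv 0$ (which has $\ent(s)=0\in[0,B_0]$ and $\pi(s)\ge 0$) shows $\cT_\gb([0,B_0])\ge 0$, so to obtain \eqref{relsupK0} for all $\gb\le K$ it is enough to find $k_0=k_0(K)$ such that, $\bbP$-a.s., $\cT_\gb([2^k,2^{k+1}))\le 0$ for every $k\ge k_0$ and every $\gb\in[0,K]$. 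Since for a fixed $s$ the map $\gb\mapsto\gb\pi(s)-\ent(s)$ is non-decreasing ($\pi(s)\ge 0$), each $\gb\mapsto\cT_\gb([c,d))$ is non-decreasing, so it suffices to treat $\gb=K$.

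For that, I would reuse the block tail bound already established while proving the finiteness of the moments of $\cT_\gb$. Taking $t=0$ in \eqref{tailTk} (equivalently, combining \eqref{Tk} with Lemma~\ref{lem:moment}), for every $a<\ga$ there is $c_a>0$ with
\[
\bbP\Big(\cT_K\big([2^k,2^{k+1})\big)>0\Big)\le c_a\,K^{a}\,2^{-k a\left(1-\frac{1}{2\ga}\right)}
\]
for all $k$ large enough (depending on $K$). Since $\ga>1/2$ we have $1-\tfrac1{2\ga}>0$, so the right-hand side is summable in $k$; the Borel--Cantelli lemma then yields, $\bbP$-a.s., an index $k_0=k_0(K)$ such that $\cT_K([2^k,2^{k+1}))\le 0$ for all $k\ge k_0$.

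It then remains only to collect the pieces: setting $B_0:=2^{k_0}$, monotonicity in $\gb$ gives $\cT_\gb([2^k,2^{k+1}))\le\cT_K([2^k,2^{k+1}))\le 0\le\cT_\gb([0,B_0])$ for all $\gb\le K$ and $k\ge k_0$, and feeding this into the decomposition above gives $\cT_\gb=\cT_\gb([0,B_0])$, i.e.\ \eqref{relsupK0}. I do not expect a genuine obstacle here: the real content is the summability in $k$ of the block tails, which is precisely where the standing hypothesis $\ga>1/2$ is used and which is already prepared by Lemma~\ref{lem:moment} and the scaling relations; the only mild points of care are extracting uniformity over $\gb\le K$ (obtained for free from monotonicity) and the trivial lower bound $\cT_\gb([0,B_0])\ge 0$.
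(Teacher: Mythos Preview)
Your proof is correct and follows essentially the same route as the paper: dyadic decomposition of the entropy range, the block tail bound \eqref{tailTk} combined with Lemma~\ref{lem:moment}, and Borel--Cantelli using that $\ga>1/2$. The only cosmetic differences are that the paper takes $t=-1$ (so the large-entropy blocks satisfy $\cT_\gb([2^k,2^{k+1}))\le -1$) while you take $t=0$ together with the trivial bound $\cT_\gb([0,B_0])\ge 0$, and you make the monotonicity-in-$\gb$ step for uniformity over $\gb\le K$ more explicit than the paper does.
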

\begin{proof}
Let us recall that $\cT_\beta=\cT_\beta \big( [0,1) \big)\vee \displaystyle\sup_{k\ge 0} \cT_\beta \big( [2^k,2^{k+1} ) \big)$. 
Using \eqref{tailTk} with $t=-1$, for any $a<\alpha$ we have that 
\[
\bbP\Big(\cT_\beta \big( [2^k,2^{k+1}) \big) >-1  \Big)
\le  c_a  \gb^{a} 2^{k  \tfrac{a}{2\ga} }   \big( 2^k-1 \big)^{-a} \le c_{a,K} \, 2^{k (\tfrac{1}{2\ga}-1) } .
\]
Since $\tfrac{1}{2\ga}-1<0$, by Borel-Cantelli lemma we obtain that
  $\bbP$-a.s. there exists $k_0>0$ such that $\cT_\beta \big( [2^k,2^{k+1})) \le -1$ for all $k\ge k_0$. This concludes the proof. 
\end{proof}
Then, since we now consider paths with entropy bounded by $B_0$,
we can restrict the Poisson Point Process $\cP$ to  $\mathbb R_+\times [0,1]\times [-\sqrt{2B_0},\sqrt{2B_0}]$. 
 In this case we write a realization of the Poisson Point Process by using its ordered statistic. More precisely we introduce $M_i:= (8B_0)^{1/2\ga} (\cE_1+\cdots+\cE_i)^{-1/\ga}$, where $(\cE_i)_{i\in \mathbb N}$ is an i.i.d.\ sequence of exponential of mean~$1$ and $(Y_i)_{i\in \mathbb N}$ is a i.i.d.\ sequence of uniform random variables on $[0,1]\times [-\sqrt{2B_0},\sqrt{2B_0}]$, independent of $(\cE_i)_{i\in \mathbb N}$.
 Then, $\cP\overset{(\dd )}{=}(M_i,Y_i)_{i\in\mathbb N}$ and $\pi(s)=
 \sum_{i=1}^\infty  M_i \ind_{\{Y_i \in s\}}$.

For any $\ell\in \mathbb N$, we let $\pi^{(\ell)}:=\sum_{i=1}^\ell M_i\ind_{Y_i\in s}$ be the ``truncated'' energy of a path: we can write for any $\gb < K $, and any $\gd>0$ such that $\gb+\gd \le K$
\begin{align*}
 \cT_{\gb+\gd} = \cT_{\gb+\gd}\big( [0,B_0] \big) \le \sup_{s\in \sD, \ent(s) \le B_0} &\big\{ (\gb+\gd) \pi^{(\ell)}(s) - \ent(s) \big\}\\
 & + (\gb+\gd) \sup_{s\in \sD, \ent(s) \le B_0} \big | \pi(s) -\pi^{(\ell)}(s) \big| \, .
\end{align*}
Then, we show that
\begin{equation}
\label{truncatedconv}
\max_{s\in \sD, \ent(s) \le B_0}\big|\pi(s)-\pi^{(\ell)}(s)\big|\xrightarrow[\ell\to\infty]{a.s.}0 \, .
\end{equation}
Hence, for any fixed $\gep$, we can a.s.\ choose some $\ell_{\gep}$ such that  for any $\gb < K$ and any $\gd>0$ with $\gb+\gd \le K$
\[\cT_{\gb} \le \cT_{\gb+\gd}  \le  \sup_{s\in \sD, \ent(s) \le B_0} \big\{ (\gb+\gd) \pi^{(\ell)}(s) - \ent(s) \big\} + K \gep \, .\]
Then, letting $\gd\downarrow 0$, and since the supremum can now be reduced to a finite set (we consider only $\ell$ points), we get that for any $\gb <K$
\[\cT_{\gb} \le \lim_{\gd\downarrow 0} \cT_{\gb+\gd} \le \sup_{s\in \sD, \ent(s) \le B_0} \big\{ \gb \pi^{(\ell)}(s) - \ent(s) \big\}  + \gep \le \cT_{\gb} + \gep\, . \]
Since $\gep$ is arbitrary, this shows that $\lim_{\gd\downarrow 0} \cT_{\gb+\gd} =\cT_{\gb}$ a.s., that is $\gb\mapsto \cT_{\gb}$ is right-continuous.

It remains to prove \eqref{truncatedconv}. For any $i\in \mathbb N$ we consider $\boldsymbol \gU_i=\{Y_1,\dots,Y_i\}$ and for any given path $s$ we define $\Delta_i = \Delta_i(s) = s\cap \boldsymbol \gU_i $ the set of the $i$ largest weights collected by $s$. Then, let $\cL_i^{(B_0)} = \sup_{s\in \mathscr{D}_{B_0}} |\Delta_i(s)|$, as introduced 
in \eqref{def:LcontLdis}. 
Realizing that $\ind_{\{Y_i\in s\}} = |\Delta_i (s)| - |\Delta_{i-1}(s)|$, and
integrating by parts (as done in \cite{HM07}), we obtain for any $s\in \sD_{B_0}$
 \begin{align}
  \pi(s)-\pi^{(\ell)}(s) &= \sum_{i> \ell } M_i \ind_{\{Y_i\in s\}} = \lim_{n\to\infty} \sum_{i= \ell+1 }^n M_i \big( |\Delta_i| - |\Delta_{i-1}| \big)\nonumber\\
  &=\lim_{n\to\infty} \bigg(\sum_{i=\ell+ 1}^{n-1} |\Delta_i|(M_{i}-M_{i+1})+M_n |\Delta_n|  -  M_{\ell} |\Delta_{\ell}|\bigg)  \nonumber\\
  &\le  \sum_{i=\ell+1}^{\infty} \cL_i^{(B_0)} (M_{i}-M_{i+1})+\limsup_{n\to\infty} M_n \cL_n^{(B_0)} .\label{sumrest}
 \end{align}
At this stage, the law of large numbers gives that  $ \lim_{n\to\infty} n^{1/\ga} M_n = (8B_0)^{1/2\ga}$ a.s., and Corollary~\ref{cor:LPP} gives that $\limsup_{n\to\infty} n^{-1/2} \cL_n^{(B_0)} <+\infty$ a.s. Since $\ga<2$, we therefore conclude that $ \limsup_{n\to\infty} M_n \cL_n^{(B_0)}= 0$ a.s.

We let $U_\ell := \sum_{i >\ell} \cL_i^{(B_0)} (M_i-M_{i-1}) $.
We are going to show that there exists some $\ell_0$ such that $\sum_{i >\ell_0} \cL_i^{(B)} (M_i-M_{i-1}) <\infty$ a.s., and thus $\lim_{\ell\to\infty }U_\ell = 0$ a.s. 
We show that $\bbE [U_{\ell_0}^2]$ is finite for $\ell_0$ large enough. For any $\gep>0$, by Cauchy-Schwarz inequality we have that
\begin{align*}
U_{\ell_0} \le \Big(\sum_{i> \ell_0} \big(i^{-\frac12 -\gep} \big)^2 \Big)^{1/2} \Big( \sum_{i>\ell_0} \big( i^{-\frac12 +\gep} \cL_i^{(B_0)} (M_i - M_{i+1}) \big)^2\Big)^{1/2}\, .
\end{align*}
Then, we get that for $\ell_0$ large enough
\begin{align*}
\bbE [U_{\ell_0}^2] &\le C \sum_{i > \ell_0} i^{1+2\gep} \bbE\big[ (\cL_i^{(B)})^2\big] \bbE \big[ (M_i -M_{i-1})^2 \big]\\
&\le C'_{B_0} \sum_{i >\ell_0} i^{1+2\gep} \times i \times i^{-2-2/\ga} <+\infty \, .
\end{align*}
Here, we used Corollary \ref{cor:LPP} and a straightforward calculation that gives $\bbE \big[ (M_i -M_{i-1})^2 \big] \le c i^{-2-2/\ga}$ for $i$ large enough (see for instance Equation  (7.2)  in \cite{HM07}). Provided $\gep$ is small enough so that $2\gep - 2/\ga <-1$ we obtain that $\bbE [ U_{\ell_0}^2]<\infty$.
\qed

\subsection{Existence and uniqueness of the maximizer}

As a consequence of Lem\-ma~\ref{lem:relsupK0}, to show that the supremum is attained and is unique  in \eqref{def:T}, it is enough to prove the
following result.
\begin{lemma}\label{lemmasupatt}
For a.e. realization of $\cP$ and for any $B>0$ we have that
$$
s_\beta^*(B)=\argmax_{s \in \sD_B } \big\{ \beta \pi(s) - \ent(s)  \big\}\,
$$
exists, and it is unique. Here, we defined  $\mathscr{D}_{B}:= \{s\in \mathscr{D} \colon \ent(s)\leq B \}$.
\end{lemma}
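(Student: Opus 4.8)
The plan is to obtain existence by a compactness/semicontinuity argument on $\sD_B=\{s\in\sD:\ent(s)\le B\}$, and uniqueness by an almost-sure genericity argument exploiting that the weights of $\cP$ have an atomless law.

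\textbf{Existence.} First I would observe that $\sD_B$ is compact in $C([0,1])$ with the uniform topology. By Cauchy--Schwarz, $\ent(s)\le B$ gives $|s(t)-s(u)|\le(\int_u^t(s')^2)^{1/2}|t-u|^{1/2}\le\sqrt{2B}\,|t-u|^{1/2}$, so $\sD_B$ is uniformly equicontinuous, and $|s(t)|\le\sqrt{2Bt}\le\sqrt{2B}$, so it is uniformly bounded; Arzel\`a--Ascoli gives relative compactness. It is closed, and $\ent$ is lower semicontinuous on it, because $\ent(s)=\sup_{\mathcal U}\ent_{\mathcal U}(s)$ is a supremum, over finite partitions $\mathcal U=\{0=u_0<\cdots<u_N=1\}$ of $[0,1]$, of the continuous functionals $\ent_{\mathcal U}(s)=\tfrac12\sum_{j}(s(u_j)-s(u_{j-1}))^2/(u_j-u_{j-1})$ (compare \eqref{def:entdelta}). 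For the energy I would show $s\mapsto\pi(s)$ is \emph{upper} semicontinuous on $\sD_B$: restricting $\cP$ to the box $[0,1]\times[-\sqrt{2B},\sqrt{2B}]$, which contains the graph of every $s\in\sD_B$, and writing it through its ordered statistics $\{(M_i,Y_i)\}_{i\ge1}$ as in the proof of Lemma~\ref{lem:moment}, each $\pi^{(\ell)}(s)=\sum_{i=1}^\ell M_i\ind_{\{Y_i\in s\}}$ is a finite sum of u.s.c.\ functions (the set $\{s:Y_i\in s\}=\{s:s(t_i)=x_i\}$ is closed in $C([0,1])$), and $\pi^{(\ell)}\to\pi$ uniformly on $\sD_B$ almost surely by \eqref{truncatedconv} (valid with $B_0$ replaced by any fixed $B$); a uniform limit of u.s.c.\ functions is u.s.c. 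Hence $s\mapsto\beta\pi(s)-\ent(s)$ is u.s.c.\ on the compact set $\sD_B$ and its supremum is attained.

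\textbf{Structure of maximizers.} For a finite $\Delta\subset\cP$ put $\Phi(\Delta):=\beta\sum_{Y_i\in\Delta}M_i-\ent(\Delta)$, with $\ent(\Delta)$ as in \eqref{def:entdelta} (almost surely all points of $\cP$ have distinct time coordinates). The canonical path associated to a finite $\Delta$ with $\ent(\Delta)\le B$, extended by a constant after its last time, belongs to $\sD_B$, collects at least $\Delta$, and has entropy exactly $\ent(\Delta)$; conversely for $s\in\sD_B$ the set $\Delta(s)$ of Poisson points on $s$ satisfies $\ent(F)\le\ent(s)$ for every finite $F\subseteq\Delta(s)$. It follows that $\sup_{s\in\sD_B}\{\beta\pi(s)-\ent(s)\}=V:=\sup\{\Phi(\Delta):\Delta\text{ finite},\ \ent(\Delta)\le B\}$. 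Now if $s^*$ is a maximizer and its collected set $\Delta^*:=\Delta(s^*)$ is finite, then $V\ge\Phi(\Delta^*)=\beta\pi(s^*)-\ent(\Delta^*)\ge\beta\pi(s^*)-\ent(s^*)=V$, forcing $\ent(\Delta^*)=\ent(s^*)$; by the equality case of Cauchy--Schwarz this means $s^*$ is piecewise linear between the time-ordered points of $\Delta^*$ and constant after the last one, i.e.\ the canonical path through $\Delta^*$. The remaining point is that $\Delta^*$ is indeed \emph{finite}: here the E-LPP estimates (Theorem~\ref{thm:generalLPP}, Corollary~\ref{cor:LPP}) enter, forcing a path that collects $k$ points of $\cP$ in the box to meet a point of weight-rank of order at least $k^2$, so that the marginal gain of far-out points eventually becomes negative; together with the almost sure absence of three collinear points of $\cP$ (a countable union of null events) this rules out infinitely many collected points. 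Consequently two distinct maximizers $s_1^*\ne s_2^*$ have distinct finite collected sets $\Delta_1\ne\Delta_2$ with $\Phi(\Delta_1)=\Phi(\Delta_2)=V$ (if $\Delta_1=\Delta_2$ they would be the same canonical path).

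\textbf{Uniqueness.} Since the collected set of any maximizer is a finite subset of the countable set $\cP$, the event that there exist two distinct maximizers is contained in $\bigcup\{\Phi(\Delta_1)=\Phi(\Delta_2)\}$, a countable union over pairs $\Delta_1\ne\Delta_2$ of finite index sets. For a fixed such pair, pick $i_0$ in their symmetric difference, say $i_0\in\Delta_1\setminus\Delta_2$; then $\Phi(\Delta_1)-\Phi(\Delta_2)=\beta M_{i_0}+R$, where $R$ depends only on $\{Y_i\}_{i\ge1}$ and $\{M_j\}_{j\ne i_0}$, because $\ent(\Delta_1)$ and $\ent(\Delta_2)$ involve the positions $Y_i$ but not the weights. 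Conditionally on $\sigma(\{Y_i\}_{i\ge1},\{M_j\}_{j\ne i_0})$ the law of $M_{i_0}$ is atomless, since $M_{i_0}$ is a strictly monotone function of $\cE_1+\cdots+\cE_{i_0}$ and, given $\cE_1+\cdots+\cE_{i_0-1}$ and $\cE_1+\cdots+\cE_{i_0+1}$, the partial sum $\cE_1+\cdots+\cE_{i_0}$ is uniformly distributed on the corresponding interval (for $i_0=1$, uniform on $(0,\cE_1+\cE_2)$, using that the split of a sum of independent exponentials is uniform). Hence $\bbP(\Phi(\Delta_1)=\Phi(\Delta_2))=\bbP(M_{i_0}=-R/\beta)=0$, and summing over the countably many pairs gives $\bbP(\exists\text{ two distinct maximizers})=0$, which is the asserted uniqueness. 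I expect the genuinely delicate step to be the claim in the second paragraph that a maximizer collects only finitely many Poisson points: it is here that the E-LPP bounds of Section~\ref{sec:main} are really needed, and it is what allows the non-uniqueness event to be written as a countable union of null events; the semicontinuity and compactness facts for existence, and the atomlessness together with the bookkeeping for uniqueness, are then routine.
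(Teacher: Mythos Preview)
Your existence argument is correct and essentially the paper's: compactness of $\sD_B$ via Arzel\`a--Ascoli, lower semicontinuity of $\ent$ (you use the finite-partition representation, the paper uses weak-$L^2$ compactness via Banach--Alaoglu; both work), and upper semicontinuity of $\pi$ via the uniform convergence $\pi^{(\ell)}\to\pi$ on $\sD_B$ from~\eqref{truncatedconv}.

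The gap is in the uniqueness part, precisely at the step you flag as delicate: the claim that a maximizer $s^*$ collects only finitely many Poisson points. Your sketch does not close. The E-LPP bound does force the $k$-th collected point (in rank) to have rank $\gtrsim k^2$ and hence weight $M_{i_k}\lesssim k^{-2/\alpha}$, but the ``marginal gain'' $\beta M_{i_k}-(\ent(s_k)-\ent(s_{k-1}))$ need not become negative: the entropy increment is strictly positive under ``no three collinear'', yet has no quantitative lower bound --- the $k$-th point may sit arbitrarily close to the segment joining its time-neighbours, making the increment $o(M_{i_k})$. Nothing then prevents $t_\beta(s_k)\uparrow V$ along an infinite collected set. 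Without finiteness, both your reduction to a countable union over pairs of \emph{finite} index sets and your implication ``$\Delta_1=\Delta_2\Rightarrow s_1^*=s_2^*$ (same canonical path)'' collapse.

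The paper's uniqueness argument sidesteps this entirely. First, two distinct maximizers must have $I(s_1)\ne I(s_2)$; the paper invokes density of $\boldsymbol\gU_\infty$, and a clean justification is strict convexity of $\ent(s)=\tfrac12\|s'\|_{L^2}^2$: if $I(s_1)=I(s_2)$ then $\pi(s_1)=\pi(s_2)$, and the midpoint $\tfrac12(s_1+s_2)$ still passes through every common point (hence has $\pi\ge\pi(s_1)$) but strictly smaller entropy unless $s_1=s_2$, contradicting maximality. Then one picks a \emph{single} index $i_0$ with $Y_{i_0}\in I(s_1)\setminus I(s_2)$: the equality of the two maxima rewrites as $\beta M_{i_0}$ equal to a quantity measurable with respect to $(\{Y_j\}_{j\ge1},\{M_j\}_{j\ne i_0})$, and the conditional law of $M_{i_0}$ is atomless, so this event has probability zero. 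A countable union over $i_0\in\bbN$ --- not over pairs of finite sets --- finishes, with no finiteness of $I(s^*)$ ever required. Note also that in the paper the E-LPP estimates enter only through~\eqref{truncatedconv} (hence existence), not uniqueness.
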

\begin{proof}
Our first step is to show that $\mathscr{D}_{B}$ is compact for the uniform norm $\| \cdot \|_\infty$. 
Let us observe that for any $s:[0,1]\to \mathbb R$, the condition $\ent(s)\le B$ implies that
\begin{equation*}
|s(x)-s(y)|\leq \int_y^x |s'(t)| d t \leq (2B)^{1/2} |x-y|^{1/2},\qquad \forall\, x,y\in [0,1],
\end{equation*}
so that $s$ belongs to the H\"older Space $C^{1/2}([0,1])$.
Hence, $\mathscr{D}_{B}$ is included in $ C^{1/2}([0,1])$ which is compact for the uniform norm $\| \cdot \|_\infty$ by the Ascoli-Arzel\`a theorem. We therefore only need to show that $\mathscr{D}_{B}$ is closed for the uniform norm $\| \cdot \|_\infty$.

For this purpose we consider a convergent sequence $s_n$ and we 
denote by $\mathbf{s}$ its limit. 
Since $\ent(s_n)=\frac12 \|s_n'\|_{L^2}^2$ for all $n$, we have that $(s_n')_{n\in \mathbb N}$ belongs to the (closed) ball of 
radius $(2B)^{1/2}$ of $L^2([0,1])$. 
By Banach–Alaoglu theorem, the sequence $(s_n')_{n\in \mathbb N}$  contains a weakly convergent subsequence. This means that there exist $n_k$ and $s^{\star}$ such that
$$
\int_0^1 \phi(x) s_{n_k}'(x) \dd x \xrightarrow[k\to\infty]{} \int_0^1 \phi(s)  s^{\star}(x) d x, \, \qquad \forall \phi\in L^2([0,1]).
$$
By uniqueness of the limit (and taking $\phi(x)=\ind_{\{[0,y]\}}(x)$), this relation implies that $\mathbf{s}(y)=\int_0^y s^\star (x) d x$, that is $\mathbf{s}'= s^\star$ almost everywhere. Since the $L^2$ norm is  weakly lower semi-continuous by the Hahn-Banach theorem -- that is $\|s^\star\|_{L^2}\le \liminf_{k\to\infty}\|s_{n_k}'\|_{L^2}$ -- we obtain that $\mathbf{s}\in \mathscr{D}_B$, so $\mathscr{D}_B$ is closed. 
 As a by-product of this argument we also have that the entropy function $s \mapsto \ent(s)$ is lower semi-continuous on $(\mathscr D_B, \|\cdot\|_{\infty})$.

\subsubsection{Existence of the maximizer}
Since $\mathscr{D}_B$ is compact, the existence of the maximizer comes from the fact that the function
\begin{equation}
\label{tbeta}
t_\beta(s):= \gb \pi(s)-\ent(s)
\end{equation}
is upper semi-continuous, thanks to the extreme value theorem tells. Since we have already shown that $s \mapsto \ent(s)$ is lower semi-continuous, we only need to prove the following. 

\begin{lemma}\label{piIsUP}
For a.e. realization of $\cP$ and for any $B>0$ the function $s\mapsto \pi(s)$ is upper semi-continuous on $(\mathscr D_B, \|\cdot\|_{\infty})$. 
\end{lemma}
\begin{proof}
We recall that if $s\in \mathscr{D}_B$ then $ \max_{t\in [0,1]} |s(t)|\leq \sqrt{2B}$. Therefore, using the same notations as above, we can write a realization of the Poisson Point Process $\cP$ by using its ordered statistic: $\cP=(M_i,Y_i)_{i\in\mathbb N}$, $\pi(s)=
 \sum_{i=1}^\infty  M_i \ind_{\{Y_i \in s\}}$, and recall that for any $\ell\in \mathbb N$ we let $\pi^{(\ell)}:=\sum_{i=1}^\ell M_i\ind_{\{Y_i\in s\}}$. 
Thanks to \eqref{truncatedconv}, we only need to prove that  for any fixed $\ell\in \mathbb N$ the function $s\mapsto \pi^{(\ell)}(s)$ is upper semi-continuous: then $\pi(s)$, as the uniform limit of $\pi^{(\ell)}$, is still upper semi-continuous.

For any $s\in \mathscr D_B$ we let $\iota_s :=\boldsymbol \gU_\ell\setminus \{s\cap \boldsymbol \gU_\ell\}$
be the set of all points of $\boldsymbol \gU_\ell=\{Y_1,\dots, Y_\ell\}$ that are not in $s$. 
Since there are finitely many points, we realize that there exists $\eta=\eta(s,\ell)>0$ such that $d_H(\iota_s,\text{graph}(s))>\eta$, with $d_H$ is the Hausdorff distance. 

Given $s\in \mathscr D_B$, we consider a sequence $(s_n)_n, \, s_n\in \mathscr D_B$ that converges to $s$, $\lim_{n\to\infty}\|s_n-s\|_\infty=0$. 
We observe that whenever $\|s_n-s\|_{\infty}\le \eta/2$, we have that $d_H(\iota_s,\text{graph}(s_n))>\eta/2$. This means that for $n$ large enough
\[
\{s_n\cap \boldsymbol \gU_\ell\}\subset\{s\cap \boldsymbol \gU_\ell\}\, ,
\]
which implies that $\pi^{(\ell)}(s)\ge \limsup_{n\to\infty}\pi^{(\ell)}(s_n)$.
\end{proof}

\subsubsection{Uniquenes of the maximizer}  The strategy is very similar to the one used in \cite[Lemma 4.1]{AL11} or \cite[Lemma 4.2]{HM07}. 
For any $s\in \mathscr D_B$, we let $I(s):=\{s\cap \boldsymbol \gU_\infty\}$, where we $\boldsymbol \gU_\infty=\{Y_i, \, i\in\mathbb N\}$.

Let us assume that we have two maximizers $s_1\neq s_2$. 
 Since $\boldsymbol \gU_\infty$ is dense in $[0,1]\times [-\sqrt{2B},\sqrt{2B}]$ we have that $I(s_1)\neq I(s_2)$. 
In particular, there exists $i_0$ such that $Y_{i_0}\in I(s_1)$ and $Y_{i_0}\not\in I(s_2)$, and since $s_1$ and $s_2$ are two maximizers of \eqref{tbeta} it means
\[
\max_{s\colon Y_{i_0}\in I(s)} t_\beta(s)=\max_{s\colon Y_{i_0}\not\in I(s)} t_\beta(s).
\]
This implies that 
\begin{equation}
\label{contradiction}
\beta M_{i_0}= \max_{s\colon Y_{i_0}\not\in I(s)} t_\beta(s)-\max_{s\colon Y_{i_0}\in I(s)}\bigg\{\gb \sum_{ j, j\neq i_0} M_j \ind_{\{Y_j\in s\}} -\ent(s)\bigg\} \, .
\end{equation}
Conditioning on $(Y_j)_{j\in \mathbb N}$ and $(M_j)_{j\in \mathbb N, j\neq i_0}$ we have that the l.h.s. has a continuous distribution -- the distribution of $M_{i_0}^{-\alpha}$
conditional on $(Y_j)_{j\in \mathbb N}$ and $(M_j)_{j\in \mathbb N, j\neq i_0}$ is uniform on the interval $[M_{i_0-1}^{-\ga}, M_{i_0+1}^{-\ga}]$~--, while the r.h.s. is a constant -- it is independent of $M_{i_0}$. Therefore the event \eqref{contradiction} has zero probability, and by sigma sub-additivity we get that $\bbP\big(I(s_1)\neq I(s_2)\big)=0$, which contradicts the existence of two distinct maximizers.
\end{proof}

\section{Proof of Proposition \ref{thm:discreteELPP} and Theorem \ref{prop:ConvVP}}

\label{sec:proofdisc}
Let us state right away a lemma that will prove to be useful in the rest of the paper. 
\begin{lemma}
\label{lem:Mell}
For any $\eta>0$, there exists a constant $c$ such that, for any $t>1$ and any $\ell \le nh$,  we have
\[\bbP\Big( M_\ell^{(n,h)} > t m( \tfrac{nh}{\ell})  \Big) \le (ct)^{ -(1-\eta)\ga \ell} \, . \]
\end{lemma}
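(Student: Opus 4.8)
The plan is to control the order statistic $M_\ell^{(n,h)}$, the $\ell$-th largest value among the $|\Lambda_{n,h}| = n(2h+1)$ i.i.d.\ weights in the box, via a direct binomial estimate. The event $\{M_\ell^{(n,h)} > t\, m(nh/\ell)\}$ is exactly the event that at least $\ell$ of the $N := n(2h+1)$ i.i.d.\ variables exceed the threshold $u_t := t\, m(nh/\ell)$. Writing $p_t := \bbP(\go > u_t)$, we have
\[
\bbP\Big( M_\ell^{(n,h)} > t\, m(nh/\ell) \Big) = \bbP\big( \mathrm{Bin}(N, p_t) \ge \ell \big) \le \binom{N}{\ell} p_t^{\ell} \le \Big( \frac{e N p_t}{\ell} \Big)^{\ell},
\]
using the standard union/binomial bound $\binom{N}{\ell}\le (eN/\ell)^\ell$. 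So everything reduces to showing $N p_t / \ell \le (c t)^{-(1-\eta)\ga}$ for a suitable constant $c$.

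The key input is the regular variation of $m(\cdot)$: by definition \eqref{def:m}, $\bbP(\go > m(x)) = 1/x$, and $m$ is regularly varying with index $1/\ga$. First I would record that $p_t = \bbP(\go > u_t)$; by the defining property $\bbP(\go > m(nh/\ell)) = \ell/(nh)$, so $N p_t/\ell = \tfrac{N}{nh}\cdot \tfrac{nh}{\ell}\,\bbP\big(\go > t\, m(nh/\ell)\big)\big/\big(1/(nh)\cdot\ldots\big)$ — more cleanly, $N p_t/\ell = \tfrac{N \ell^{-1} \bbP(\go > t\,m(nh/\ell))}{1}$, and since $\bbP(\go > m(nh/\ell)) = \ell/(nh)$ and $N = n(2h+1) \le 3nh$, it suffices to bound the ratio
\[
R_t := \frac{\bbP\big( \go > t\, m(nh/\ell) \big)}{\bbP\big( \go > m(nh/\ell) \big)}.
\]
By regular variation of the tail $\bbP(\go>x) = L(x)x^{-\ga}$ (equivalently Potter's bounds applied to $x\mapsto\bbP(\go>x)$, uniformly on $[1,\infty)$ once $t\ge 1$), for any $\eta>0$ there is $c_\eta>0$ such that $R_t \le c_\eta\, t^{-(1-\eta)\ga}$ uniformly over the argument $m(nh/\ell)\ge m(1)$ (here one uses $\ell \le nh$, so the argument is $\ge m(1)$, bounded below; a mild case distinction handles very small arguments if $m(1)<1$, using that $L$ is slowly varying and bounded on compacts). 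Combining, $N p_t/\ell \le 3 c_\eta\, t^{-(1-\eta)\ga} =: (c't)^{-(1-\eta)\ga}$ after absorbing constants, and raising to the power $\ell$ gives the claim.

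\textbf{Main obstacle.} The only subtlety is making Potter's bound genuinely \emph{uniform} in $n,h,\ell$: the tail ratio $R_t$ must be controlled by $C t^{-(1-\eta)\ga}$ with a constant independent of the location $m(nh/\ell)$ at which we evaluate it. This is where one must be slightly careful — Potter's theorem gives, for every $\eta>0$, a threshold $x_0$ such that $\bbP(\go>tx)/\bbP(\go>x) \le (1+\eta) t^{-(1-\eta)\ga}$ for all $x\ge x_0$ and $t\ge 1$; for the finitely many ``small'' values $x\in[m(1), x_0]$ (a compact set, since $\ell\le nh$ forces $nh/\ell\ge 1$ hence $m(nh/\ell)\ge m(1)$) one absorbs the discrepancy into the constant $c$. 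Everything else is the routine binomial tail estimate above. I expect this lemma to be short; its role is as a technical workhorse feeding into the proof of Proposition~\ref{thm:discreteELPP}.
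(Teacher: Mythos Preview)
Your proposal is correct and follows essentially the same route as the paper: both arguments bound $\bbP(M_\ell^{(n,h)} > t\,m(nh/\ell))$ by the binomial/union estimate $\binom{N}{\ell}\,\bbP(\go > t\,m(nh/\ell))^\ell$ and then invoke Potter's bound on the regularly varying tail to get $\bbP(\go > t\,m(nh/\ell)) \le c_\eta\, t^{-(1-\eta)\ga}\,\bbP(\go > m(nh/\ell)) = c_\eta\, t^{-(1-\eta)\ga}\,\ell/(nh)$. Your treatment is in fact slightly more careful than the paper's about the uniformity of Potter's bound over the base point $m(nh/\ell)$, but the substance is identical.
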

\begin{proof}
We simply write that by a union bound
\[
\bbP\Big( M_r^{(n,h)} > t m( \tfrac{nh}{r} )  \Big)  \le \binom{nh}{r} \bbP\Big(\go _1 > t m( \tfrac{nh}{r}) \Big)^r 
\le   \Big( c \frac{nh}{r}  \bbP\big(\go _1 > t m(\tfrac{nh}{r})  \big) \Big)^r\, .
\]
Then, since $\bbP(\go_1 >x)$ is regularly varying with exponent $-\ga$, Potter's bound (cf. \cite{BGT89}) gives that there is a constant $c_{\eta}$ such that for any $t\ge 1$
\[\bbP\big(\go _1 > t m(\tfrac{nh}{r})  \big) \le c_{\eta} t^{-(1-\eta)\ga} \bbP\big(\go _1 >  m(\tfrac{nh}{r})  \big) = c_{\eta} t^{-(1-\eta)\ga }\, \frac{nh}{r} \, ,\]
where we used the definition of $m(\cdot)$ in the last identity. This concludes the proof.
\end{proof}

\subsection{Continuum limit of the ordered statistic}
\label{contLimitHT}

For any $q>0$  let $\Lambda_{n,qh} = \llbracket 1,n \rrbracket \times \llbracket -qh,qh \rrbracket$ and let $(M_r^{(n,qh)},Y_r^{(n,qh)})_{r=1}^{|\Lambda_{n,qh}|}$ be the ordered statistic in that box, cf. \eqref{eq:ordstat}.
If we rescale $\Lambda_{n,qh}$ by $n\times h$, and we let $(\tilde Y_r^{(n,qh)})_{r=1}^{|\Lambda_{n,h}|}$ be the rescaled permutation, \textit{i.e.}\ a random permutation of the points of the set $([0,1]\times [-q,q])\cap (\frac{\mathbb N}{n}\times \frac{\mathbb Z}{h})$.
Then for any fixed $\ell\in \mathbb N$, 
\begin{equation}
\label{convYn}
\big(\tilde Y_1^{(n,qh)},\dots,\tilde Y_\ell^{(n,qh)} \big) \overset{(\dd)}{\rightarrow}\big( Y_1,\dots, Y_\ell \big), \quad \text{as}\quad n,h\to\infty,
\end{equation}
where $(Y_i)_{i\in\mathbb N}$ is an i.i.d.\ sequence of uniform random variables on $[0,1]\times [-q,q]$.
For the continuum limit for the weight sequence $(M_r^{(n,qh)})_{r=1}^{|\Lambda_{n,qh}|}$, we use some basic facts of 
the classical extreme value theory (see e.g., \cite{R87}), that is
for all $\ell\in\mathbb N$,
\begin{equation}
\label{convWeight}
\Big(\tilde M_i^{(n,qh)}:=\frac{M_i^{(n,qh)}}{m(nh)}, \, i=1,\dots, \ell\Big)
\overset{(\dd)}{\rightarrow} \big(M_i,\, i=1,\dots, \ell \big),
\end{equation}
where $(M_i)_{i\in\mathbb N}$ is the \emph{continuum weight sequence}. The sequence $(M_i)_{i\ge 1}$ can be defined as $M_i:=(2q)^{1/\ga}(\cE_1+\dots+\cE_i)^{-\frac{1}{\alpha}}$, where $(\cE_i)_{i\in \mathbb N}$ is an i.i.d.\ sequence of 
exponential random variables of mean $1$, independent of the~$Y_i$'s. 

In such a way $(M_i,Y_i)_{i\in\mathbb N}$ is the ordered statistic associated with a realization of
a Poisson Point Process on $[0,\infty)\times [0,1]\times[-q,q]$ 
of intensity $\mu(\dd w, \dd t,\dd x)=\frac{\alpha}{2} w^{-\alpha-1}\ind_{\{w>0\}} \dd w \dd t \dd x$.

\subsection{Proof of Theorem \ref{prop:ConvVP}}
For any $q>0$, 
we consider the Poisson Point Process restricted to $[0,1]\times [-q,q]$, and we label its elements according to its ordered statistic $(M_i, Y_i)_{i\in \mathbb N}$. 
For any $\Delta\subset[0,1]\times [-q,q]$  we define $\pi^{(\ell)}(\Delta)=\sum_{i=1}^\ell M_i\ind_{\{Y_i\in \Delta\}}$ and $\pi^{(>\ell)}(\Delta):=\pi(\Delta)-\pi^{(\ell)}(\Delta)$. 
In analogy with the discrete setting (cf. \eqref{def:discrELPPell}), we define
\begin{equation}\label{def:TAL}
\begin{split}
&\cT_{\nu,q}^{(>\ell)} = \sup_{s\in \sM_q} \big\{ \nu \pi^{(>\ell)}(s) - \ent(s) \big\}\, ,
\\
&  \cT_{\nu,q}^{(\ell)} = \sup_{s\in \sM_q} \big\{ \nu \pi^{(\ell)}(s) - \ent(s) \big\}\,.
\end{split}
\end{equation}
We first show the convergence \eqref{conv:largeweights} of the large-weights variational problem, before we prove~\eqref{def:TA}.

\emph{Convergence of the large weights.}
Note that the maximum of $T_{n,qh}^{\gb_{n,h}, (\ell)}$ and $\cT_{\nu,q}^{(\ell)}$ are achieved on $\gU_\ell=\gU_\ell(q)$ and $\boldsymbol \gU_\ell=\boldsymbol \gU_\ell(q)$ respectively, that is
\begin{equation}
\label{conv:Tl}
\begin{split}
&T_{n,qh}^{\gb_{n,h}, (\ell)}=\max_{\Delta\subset \gU_\ell}\big\{  \beta_{n,h} \Omega_{n,h}^{(\ell)} (\Delta) - \ent(\Delta)  \big\}\, ,
\\
&\cT_{\nu,q}^{(\ell)} = \sup_{\Delta\subset \boldsymbol \gU_\ell} \big\{ \nu \pi^{(\ell)}(\Delta) - \ent(\Delta) \big\} \, ,
\end{split}
\end{equation}
where $\gU_\ell(q)$ (resp. $\boldsymbol \gU_\ell(q)$) is the set of the locations of the $\ell$ largest weights inside $\Lambda_{n,qh}$ (resp. $\boldsymbol \Lambda_{1,q}$). 
Since we have only a finite number of points, the convergence~\eqref{conv:largeweights} is a consequence of \eqref{convYn} and \eqref{convWeight} and the Skorokhod representation theorem.

\emph{Restriction to the large weights.} 
To show the convergence \eqref{def:TA}, it is therefore enough to control the contribution of the large weights.
Let $\delta>0$ such that $\frac{1}{\alpha}-\frac12>\delta$. Using Potter's bound (cf. \cite{BGT89}) we have that 
\[
\big(\gb_{n,h} m(nh/\ell)\big)^{\frac43} \Big(\frac{\ell^2 n}{h^2}\Big)^{\frac13} \le c \frac{h^2}{n} \ell^{-\frac{4}{3}(\frac{1}{\alpha}-\frac12-\delta)}.
\]
Plugging it into \eqref{bigthanell} and taking $b=b_{\ell,\epsilon}:=\epsilon\, \ell^{\frac{4}{3}(\frac{1}{\alpha}-\frac12+\delta)}$, 
we obtain that 
\begin{equation}\label{TbigELLrest}
\bbP \Big(  \frac{n}{h^2} T_{n,qh}^{\gb_{n,h}, (>\ell)}\ge \epsilon\Big) \le  
c' b_{\ell,\epsilon}^{-\ga \ell/4} + e^{- c' b_{\ell,\epsilon}^{1/4}} \, \overset{\ell\to\infty}{\longrightarrow} \, 0,
\end{equation}
uniformly on $n, h$. Combined with \eqref{conv:largeweights} and the first part of \eqref{Tconverge}, this gives the convergence \eqref{def:TA}.

\emph{Proof of \eqref{Tconverge}.}

This is a simple consequence of the monotonicity of $\ell \mapsto \cT_{\nu,q}^{(\ell)}$ and of $q\mapsto \cT_{\nu,q}$ (together with the fact that $\cT_{\nu}$ is well defined).

\subsection{Proof of Proposition \ref{thm:discreteELPP}}\label{proofthmELPPd}
Let us first focus on $T_{n,h}^{\gb_{n,h},(\ell)} $.
As in \eqref{def:Tcd} in the continuous setting,  we introduce, for any interval $[c,d)$,
\begin{equation}
\label{def:Tdisc-cd}
T_{n,h}^{\gb_{n,h},(\ell)} \big( [c,d) \big):= \max_{ \Delta \subset \Lambda_{n,h}, \ent(\Delta) \in [c,d) } \big\{  \gb_{n,h} \Omega_{n,h}^{(\ell)} (\Delta) - \ent(\Delta)  \big\} \, .
\end{equation}
Then, we realize that for any $d>0$
\[
T_{n,h}^{\gb_{n,h},(\ell)}  =  T_{n,h}^{\gb_{n,h},(\ell)} \big( [0,d) \big) \vee \sup_{k \ge 1 } T_{n,h}^{\gb_{n,h},(\ell)} \big( [2^{k-1} d,2^{k} d) \big)   \, . 
\]
Using that 
\begin{align*}
&T_{n,h}^{\gb_{n,h},(\ell)} \big( [2^{k-1} d,2^{k}d) \big) \le \gb_{n,h} \sup_{ \Delta\, : \,\ent(\Delta)  \le 2^{k} d}   \Omega_{n,h}^{(\ell)} (\Delta)    - 2^{k-1} d , \quad \text{for}\, k\ge 1, \\
&T_{n,h}^{\gb_{n,h},(\ell)} \big( [0,d) \big) \le \gb_{n,h} \sup_{ \Delta\, : \,\ent(\Delta)  \le  d}   \Omega_{n,h}^{(\ell)} (\Delta),
\end{align*}
with the choice $d=  b \, \hat\gb $ and { $\hat \gb := (\gb_{n,h} m(nh))^{4/3}( n/h^2)^{1/3}$, } a union bound gives that
\begin{align}
\label{Tnh-large}
\bbP\Big( T_{n,h}^{\gb_{n,h},(\ell)}  & \ge b \hat \beta 
 \Big)   \le   \sum_{k\ge  0 } \bbP\bigg(  \gb_{n,h}\sup_{ \Delta\, : \,\ent(\Delta)  \le 2^{k}   b  \hat \gb }   \Omega_{n,h}^{(\ell)} (\Delta)  \ge 2^{k-1}  b  \hat \gb  \bigg) \notag \\
  &   \le   \sum_{k\ge  0 } \bbP\bigg( \sup_{ \Delta\, : \,\ent(\Delta)  \le 2^{k}   b  \hat \gb }   \Omega_{n,h}^{(\ell)} (\Delta)  \ge 2^{k-1}  b \,  m(nh) \big( \hat\gb n/h^2 \big)^{1/4}  \bigg)\, ,
\end{align}
where we use that  $\hat\gb$ satisfies the equation $\hat \gb = \gb_{n,h} m(nh) (\hat \gb n/h^2)^{1/4}$.

We then need the following lemma, analogous to Lemma \ref{lem:moment}.
\begin{lemma}
\label{lem:Omeg}
For any $a<\alpha$, there exists a constant $c$ such that for any $B\ge 1$, $n,h\ge 1$, and any $t>1$
\[\bbP\Big( \sup_{ \Delta\, : \,\ent(\Delta)  \le B }   \Omega_{n,h}^{(\ell)} (\Delta)  \ge  t \times m(nh) \big( B n/h^2 \big)^{1/4}\Big) \le c t^{-a}\]
\end{lemma}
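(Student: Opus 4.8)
The plan is to prove Lemma~\ref{lem:Omeg} by adapting the proof of Lemma~\ref{lem:moment} to the discrete setting, the new difficulty being to keep the dependence on $(n,h,B)$ explicit and uniform. First I would use that $\Omega_{n,h}^{(\ell)}(\Delta)$ only involves the $\ell$ largest weights $M_1^{(n,h)}\ge\cdots\ge M_\ell^{(n,h)}$ located at $Y_1^{(n,h)},\dots,Y_\ell^{(n,h)}$, and record the structural fact that---since the disorder has a continuous law---the sorted weights $(M_r^{(n,h)})_r$ are independent of the positions $(Y_r^{(n,h)})_r$, which form a uniform random permutation of $\Lambda_{n,h}$. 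Splitting the rank index dyadically and using that, for any $\Delta$ with $\ent(\Delta)\le B$, at most $L_{2^{j+1}}^{(B)}(n,h)$ of the points $Y_1^{(n,h)},\dots,Y_{2^{j+1}}^{(n,h)}$ can lie in $\Delta$, I would obtain, exactly as in~\eqref{eq:split-sup},
\[
\sup_{\Delta:\,\ent(\Delta)\le B}\Omega_{n,h}^{(\ell)}(\Delta)\ \le\ \sum_{j=0}^{\lfloor\log_2\ell\rfloor}M_{2^j}^{(n,h)}\,L_{2^{j+1}}^{(B)}(n,h)\,.
\]

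\textbf{Main estimate.} Fixing $\delta>0$ with $\tfrac1\alpha-\tfrac12>2\delta$ and $C:=\sum_{j\ge0}2^{j(1/2-1/\alpha+2\delta)}<\infty$ (finite precisely because $\alpha<2$), a union bound assigning to the $j$-th block the budget $\tfrac1C\,t\,m(nh)(Bn/h^2)^{1/4}\,2^{j(1/2-1/\alpha+2\delta)}$ reduces the claim to estimating $\bbP\big(M_{2^j}^{(n,h)}L_{2^{j+1}}^{(B)}(n,h)\ge\mathrm{budget}_j\big)$. Since the typical size of $L_{2^{j+1}}^{(B)}(n,h)$ is $(Bn/h^2)^{1/4}2^{j/2}$ (by Corollary~\ref{cor:LPP}) and that of $M_{2^j}^{(n,h)}$ is $m(nh/2^j)\asymp m(nh)2^{-j/\alpha}$---the two exponents of $2^j$ adding up to $1/2-1/\alpha+2\delta$ after inserting a slack $2^{2j\delta}$---I would split this event, as in~\eqref{split-sup-unionbound}, into an E-LPP part $\{L_{2^{j+1}}^{(B)}(n,h)\ge C'\log t\,(Bn/h^2)^{1/4}2^{j(1/2+\delta)}\}$ and a heavy-weight part $\{M_{2^j}^{(n,h)}\ge C''\tfrac{t}{\log t}\,m(nh)\,2^{-j(1/\alpha-\delta)}\}$. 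For the first part I would apply Theorem~\ref{thm:generalLPP}-(ii) \emph{directly} with $m=2^{j+1}$: the bracket in the corresponding bound equals a constant times $\big((C')^2(\log t)^2\,2^{2j\delta}\big)^{-1}\le\tfrac12$ for $C'$ large and $t\ge2$, so the probability is at most $(\tfrac12)^{k}$ with $k$ of order $C'\log t\,2^{j\delta}$, which sums over $j$ to at most $c\,t^{-C'\log 2}\le c\,t^{-a}$ once $C'$ is large. For the second part I would rewrite $m(nh)2^{-j(1/\alpha-\delta)}=u_j\,m(nh/2^j)$ with $u_j$ of order at least $\tfrac{t}{\log t}\,2^{j(\delta-\eta)}\ge1$ via Potter's bound (\cite{BGT89}, choosing $\eta<\delta$), and apply Lemma~\ref{lem:Mell} to get a bound $(cu_j)^{-(1-\eta)\alpha2^j}$; with $\eta$ small enough that $(1-\eta)\alpha>a$, the $j=0$ term is at most a constant times $(t/\log t)^{-(1-\eta)\alpha}\le c\,t^{-a}$ and the $j\ge1$ terms are summable and smaller. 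Summing over $j$ yields $\bbP\big(\sup_\Delta\Omega_{n,h}^{(\ell)}(\Delta)\ge t\,m(nh)(Bn/h^2)^{1/4}\big)\le c\,t^{-a}$, possibly after replacing $a$ by a slightly smaller value to absorb logarithmic factors.

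\textbf{Main obstacle.} I expect the main difficulty to be the bookkeeping that makes the constant $c$ uniform in $(n,h,B)$, and in particular the sub-critical regime where $(Bn/h^2)^{1/4}\sqrt{2^{j+1}}<1$ for the relevant blocks, so that the E-LPP threshold $k$ above drops below $1$ and Theorem~\ref{thm:generalLPP}-(ii) cannot be applied as stated. There I would instead exploit the scale $(Bn/h^2)^{1/4}\sqrt m\wedge m$ of Corollary~\ref{cor:LPP} (which gives $\bbP(L_m^{(B)}(n,h)\ge1)\le c_b\big((Bn/h^2)^{1/4}\sqrt m\big)^{b}$ for every $b$), together with the independence of the $M$'s from the positions and the heavy tail of the top weights via Lemma~\ref{lem:Mell}; the constraint $\alpha<2$ (which turns $h^{\alpha/2-1}$ into a gain rather than a loss) and a few further applications of Potter's bound are exactly what is needed to close this regime with a constant not depending on $n,h,B$. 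The assumption $B\ge1$ is only a harmless normalization used at this stage.
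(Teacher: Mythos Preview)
Your proposal is correct and follows essentially the paper's own proof: the same dyadic decomposition $\sum_j M_{2^j}^{(n,h)} L_{2^{j+1}}^{(B)}(n,h)$, the same union-bound split into an E-LPP part (handled via Theorem~\ref{thm:generalLPP}-(ii)) and an order-statistics part (handled via Potter's bound plus Lemma~\ref{lem:Mell}), and the same absorption of logarithmic factors at the end. The ``main obstacle'' you anticipate---the sub-critical regime $(Bn/h^2)^{1/4}\sqrt{m}<1$ where the threshold $k$ in Theorem~\ref{thm:generalLPP}-(ii) drops below $1$---is in fact glossed over in the paper (its displayed bound \eqref{eq:PLtilde} tacitly assumes $(2^{j+1})^{1/2}(Bn/h^2)^{1/4}\gtrsim 1$), and your fix via Corollary~\ref{cor:LPP} together with $\alpha<2$ is the right way to close it.
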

Applying this lemma in \eqref{Tnh-large} (with $B=2^k b \hat \gb $, $t= 2^{3k/4-1}b^{3/4}$), we get that for any $k\ge 0$  
\[\bbP\Big( \sup_{ \Delta\, : \,\ent(\Delta)  \le 2^k b \hat \gb }    \Omega_{n,h}^{(\ell)} (\Delta)  \ge 2^{k-1}  b   m(nh) 
(\hat\gb {n/h^2})^{1/4} \Big) \le c (2^k b)^{-3a/4} \, ,\]
so that summing over $k$ in \eqref{Tnh-large}, we get Proposition \ref{thm:discreteELPP}.

\begin{proof}[Proof of Lemma \ref{lem:Omeg}]
We mimic here the proof of Lemma~\ref{lem:moment}, but we need to keep  the dependence on the parameters $n,h,B$.
For $i\ge 0$, we denote $\gU_i:= \{ Y_1^{(n,h)}, \ldots, Y_{i}^{(n,h)}\}$ with the $Y_j^{(n,h)}$ introduced in Section \ref{sec:OrderedStat} ($\gU_0=\emptyset$), and for any $\Delta$ we let $\Delta_i := \Delta \cap \gU_i$ be the restriction of $\Delta$ to the $i$ largest weights. 
As in \eqref{eq:split-sup}, we can write
\begin{equation}
\label{decomposeOmega}
\frac{1}{m(nh) (Bn/h^2)^{1/4}}  \times \sup_{\Delta: \ent(\Delta) \le B}\Omega_{n,h}^{(\ell)}(\Delta) \le  \sum_{j=0}^{\log_2 \ell} \tilde M_{2^j}  \tilde L_{2^{j+1}} \, ,
\end{equation}
where $\tilde M_i = M_i^{(n,h)} /m(nh)$ and $\tilde L_i =  L_i^{(B)}(n,h) / (Bn/h^2)^{1/4}$ are the renormalized weights and E-LPP (we drop the dependence on $n,h,B$ for notational convenience).

As in the proof of Lemma \ref{lem:moment}, we fix some $\gd>0$ such that $1/\ga-1/2 >2\gd$, and as for \eqref{split-sup-unionbound}, the probability in Lemma~\ref{lem:Omeg} is bounded by
\begin{equation}
\label{eq:split-sup-discrete}
\sum_{j=0}^{\log_2 \ell}  \Big[ \bbP\Big( \tilde {L}_{2^{j+1}} > C' \log t \, (2^{j+1})^{1/2+\gd} \Big) + \bbP\Big( \tilde M_{2^j} > C'' \, \frac{t}{\log t} (2^{j})^{-1/\ga+\gd} \Big)  \Big]\, .
\end{equation}

For the first probability in the sum, we obtain from Theorem \ref{thm:generalLPP}-(ii) that
 provided that $C' (\log t)  2^{j\gd}\ge 2 C_0^{1/2}$ 
\begin{equation}
\label{eq:PLtilde}
\bbP\Big(\tilde L_{2^{j+1}} > C' \log t \, (2^{j+1})^{1/2+\gd)} \Big) \le \Big(\frac12\Big)^{C' (\log t)  2^{j\gd}} \le t^{- (\log 2) C' 2^{j\gd}}\, .
\end{equation}
Then, the first sum in \eqref{eq:split-sup-discrete} is bounded by $t^{-a}$
provided that $C'$ had been fixed large enough.

For the second probability in \eqref{eq:split-sup-discrete}, we use Lemma \ref{lem:Mell} above to get that for any $a<\ga$
\begin{align}
\notag
\bbP\Big( \tilde M_{2^j} > C'' \, \frac{t}{\log t} (2^{j})^{-1/\ga+\gd} \Big) 
&\le  \bbP\Big(  M_{2^j}^{(n,h)} > C''' \, \frac{t}{\log t} (2^{j})^{\gd/2} m(nh 2^{-j} ) \Big) \\
&\le  c (\log t)^a t^{-a} (2^{j})^{-a\gd}\,  .\label{eq:PMtilde}
\end{align}
For the first inequality, we used Potter's bound to get that $m(nh 2^{-j} ) \le c m(nh) (2^{j})^{-1/\ga +\gd/2}$.
We conclude that the second sum in \eqref{eq:split-sup-discrete} is  bounded by a constant times $(\log t)^a t^{-a}$.

All together, and possibly decreasing the value a $a$ (by an arbitrarily small anount), this yields Lemma \ref{lem:Omeg}.
\end{proof}

Let us now turn to the case of $T_{n,h}^{\gb_{n,h},(>\ell)}$.
We first need an analogue of Lemma~\ref{lem:Omeg}.
\begin{lemma}
\label{lem:Omegabis}
There exists a constant $c$ such that for any $B\ge 1$, $n,h \in \bbN$ and $0\le \ell \le nh$, for any $t>1$
\[\bbP\Big( \sup_{ \Delta\, : \,\ent(\Delta)  \le B }   \Omega_{n,h}^{(>\ell)} (\Delta)  \ge   t \times m(nh /\ell) \, \ell^{1/2}\big( B n/h^2 \big)^{1/4} \Big) \le c t^{-\ga \ell /3} +  e^{-c \sqrt{t}} \, .\]
\end{lemma}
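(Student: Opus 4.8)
The plan is to follow the scheme of the proof of Lemma~\ref{lem:Omeg} (itself modelled on Lemma~\ref{lem:moment}), the single new ingredient being that, since only weights of rank larger than~$\ell$ are kept, the crude Markov bound on the weights used there can be upgraded to the sharp tail estimate of Lemma~\ref{lem:Mell}; this is exactly what will produce the polynomial factor $t^{-\ga\ell/3}$, whose exponent grows with $\ell$, whereas the stretched-exponential term $e^{-c\sqrt t}$ will come from the E-LPP tail in Theorem~\ref{thm:generalLPP}-(ii).

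First I would decompose, for any $\Delta$ with $\ent(\Delta)\le B$, the sum $\Omega_{n,h}^{(>\ell)}(\Delta)=\sum_{r>\ell}M_r^{(n,h)}\ind_{\{Y_r^{(n,h)}\in\Delta\}}$ over the dyadic blocks $r\in[2^k\ell,2^{k+1}\ell)$, $k\ge0$: inside such a block the (non-increasing) weights are at most $M_{2^k\ell}^{(n,h)}$, and the number of its indices $r$ with $Y_r^{(n,h)}\in\Delta$ is at most $|\Delta\cap\gU_{2^{k+1}\ell}|\le L_{2^{k+1}\ell}^{(B)}(n,h)$, so that
\[
  \sup_{\ent(\Delta)\le B}\Omega_{n,h}^{(>\ell)}(\Delta)\;\le\;\sum_{k\ge0}M_{2^k\ell}^{(n,h)}\,L_{2^{k+1}\ell}^{(B)}(n,h)\, .
\]
I would then normalise by $N:=m(nh/\ell)\,\ell^{1/2}(Bn/h^2)^{1/4}$. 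By Potter's bound on the ratio $m(nh/2^k\ell)/m(nh/\ell)$ and by Theorem~\ref{thm:generalLPP}-(ii), the $k$-th term above is typically of order $2^{-k(1/\ga-1/2)}N$, which is summable since $\ga<2$; I would therefore bound $\{\sup_\Delta\Omega_{n,h}^{(>\ell)}(\Delta)\ge tN\}$ by a union over $k$, giving the $k$-th block the threshold $t_k:=ct\,2^{-k\theta}$ with $\theta\in(0,\tfrac1\ga-\tfrac12)$, chosen so that $t_kN$ still exceeds a fixed constant times $t$ times the block's typical size. Within each block I would split the event $\{M_{2^k\ell}^{(n,h)}\,L_{2^{k+1}\ell}^{(B)}(n,h)\ge t_kN\}$ into ``the weight is large'' or ``the E-LPP is large'', using a square-root split of the excess factor $s$ (a constant times $t$): Lemma~\ref{lem:Mell}, applied with rank $2^k\ell\ge\ell$ and a small $\eta>0$, gives $\bbP\big(M_{2^k\ell}^{(n,h)}\ge s^{1/2}m(nh/2^k\ell)\big)\le(cs^{1/2})^{-(1-\eta)\ga 2^k\ell}$, which for $k=0$ and $s$ at least a constant times $t$ is at most $c't^{-(1-\eta)\ga\ell/2}\le c't^{-\ga\ell/3}$ (take $\eta<1/3$) and for $k\ge1$ is far smaller and summed off; while Theorem~\ref{thm:generalLPP}-(ii) applied with $m=2^{k+1}\ell$ at level of order $s^{1/2}(Bn/h^2)^{1/4}(2^{k+1}\ell)^{1/2}$ gives a bound whose base is at most $1/2$ and whose exponent is that level, hence at most $e^{-c\sqrt s}$, which after summing over $k$ yields $e^{-c\sqrt t}$. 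Adding the two contributions and fixing $\eta,\theta$ suitably gives the claim.

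The main obstacle will be the bookkeeping: extracting the precise exponents $\ga\ell/3$ and $\sqrt t$ (the square root comes from the square-root split, the loss from $\ell/2$ to $\ell/3$ from Potter's $\eta$-slack), tracking carefully the regularly-varying ratios $m(nh/2^k\ell)/m(nh/\ell)$ and absorbing them into the weights $2^{-k\theta}$, and separately handling the degenerate range where $(Bn/h^2)^{1/4}\ell^{1/2}<1$ (so that the ``level'' above may drop below~$1$): there one uses instead the a priori estimates of Corollary~\ref{cor:LPP} to see that $L_m^{(B)}(n,h)$ is essentially $0$ on the relevant small blocks, so that their contribution still fits within the stated bound.
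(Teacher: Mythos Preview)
Your proposal is correct and follows essentially the same route as the paper: the same dyadic block decomposition $\sum_{k\ge 0} M_{2^k\ell}^{(n,h)} L_{2^{k+1}\ell}^{(B)}(n,h)$, the same square-root split of the threshold between the weight and the E-LPP, Lemma~\ref{lem:Mell} for the weight part (yielding the $t^{-\ga\ell/3}$ term) and Theorem~\ref{thm:generalLPP}-(ii) for the E-LPP part (yielding the $e^{-c\sqrt t}$ term). The only cosmetic difference is that the paper packages the $k$-dependence directly into the thresholds $C'\sqrt t\,(2^{j+1})^{1/2+\gd}$ and $C''\sqrt t\,(2^j)^{-1/\ga+\gd}$ rather than introducing an explicit sequence $t_k=ct\,2^{-k\theta}$; your ``degenerate range'' worry about $(Bn/h^2)^{1/4}\ell^{1/2}<1$ is unnecessary, since the tail bound of Theorem~\ref{thm:generalLPP}-(ii) applies uniformly and the relevant level scales with $\sqrt t$ regardless.
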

\begin{proof}
Analogously to \eqref{decomposeOmega}, we get that 
\begin{align}
\label{decomposeOmegabis}
\frac{1}{m(nh/\ell) \ell^{1/2}( Bn/h^2)^{1/4}} & \times \sup_{\Delta: \ent(\Delta) \le B}\Omega_{n,h}^{(>\ell)}(\Delta)  \notag \\
&  \le  \sum_{j=0}^{\log_2 (nh/\ell)} \frac{ M_{2^j \ell}^{(n,h)}}{m(nh/\ell)}  \frac{ L_{2^{j+1} \ell}^{(n,h)} }{\ell^{1/2}( Bn/h^2)^{1/4}} \, ,
\end{align}
Then, we get similarly to \eqref{eq:PLtilde}-\eqref{eq:PMtilde} that for any $\gd>0$:
(a) thanks to Theorem~\ref{thm:generalLPP}-(ii) we have
\begin{equation}
\bbP\Big(  \frac{  L_{2^{j+1} \ell}^{(n,h)} }{\ell^{1/2} (Bn/h^2)^{1/4}} \ge C' \sqrt{t}  (2^{j+1})^{1/2+\gd}\Big) \le  \Big(\frac12 \Big)^{C' \sqrt{t} 2^{j\gd} }  \le e^{ - c\sqrt{t}\, 2^{\gd j}} \, ;
\end{equation}
(b) thanks to Lemma~\ref{lem:Mell} we have
\begin{equation}
\bbP\Big(  \frac{ M_{2^j \ell}^{(n,h)}}{m(nh/\ell)} \ge C'' \sqrt{t} (2^{j})^{-1/\ga +\gd} \Big) \le c  t^{-\ga \ell/3} (2^j)^{-\ga\gd \ell/2} \, . 
\end{equation}
Lemma \ref{lem:Omegabis} follows from a bound analogous to \eqref{eq:split-sup-discrete}.
\end{proof}
Then,  setting $\hat \gb_{\ell} = (\gb_{nh} m(nh/\ell))^{4/3} (\ell^2 n/h^2))^{\frac{1}{3}}$ so that we have $\hat\gb_{\ell} = \gb_{n,h} m(nh/\ell)\ell^{1/2} ( \hat \gb_{\ell} n/h^2)^{1/4}$, we obtain similarly to \eqref{Tnh-large} that
\begin{align*}
\bbP\Big( T_{n,h}&^{\gb_{n,h},(>\ell)}  \ge  b \times \hat\gb_{\ell}  \Big)  \\
& \le   \sum_{k\ge  0 } \bbP \Big(  \gb_{n,h}\sup_{ \Delta\, : \,\ent(\Delta)  \le 2^{k}   b  \hat \gb_{\ell} }   \Omega_{n,h}^{(>\ell)} (\Delta)  \ge 2^{k-1}  b  \hat \gb_{\ell}  \Big) \notag \\
  &   \le   \sum_{k\ge  0 } \bbP\bigg( \sup_{ \Delta\, : \,\ent(\Delta)  \le 2^{k}   b  \hat \gb_{\ell} }   \Omega_{n,h}^{(>\ell)} (\Delta)  \ge 2^{k-1}  b \,  \,  m(nh/\ell) (\ell^2 \hat \gb_{\ell}  n/h^2 )^{1/4}  \bigg)\\
 & \le  \sum_{k\ge  0 }  \Big( c(2^k b)^{- \ga \ell /4} + e^{- c 2^{3k/8} b^{3/8}} \Big) \le c' b^{-\ga \ell/4} + e^{-c' b^{1/4}}
  \, .
\end{align*}
This concludes the proof of Proposition \ref{thm:discreteELPP}.

\section*{Acknowledgements}
We are most grateful to N. Zygouras for many enlightening discussions.

\bibliographystyle{plain}
\bibliography{biblioHTBN.bib}

\end{document}